\numberwithin{equation}{section}
\numberwithin{figure}{section}
\theoremstyle{plain}
\newtheorem{thm}{\protect\theoremname}[section]
  \theoremstyle{plain}
  \newtheorem{prop}[thm]{\protect\propositionname}
  \theoremstyle{plain}
  \newtheorem{cor}[thm]{\protect\corollaryname}
  \theoremstyle{remark}
  \newtheorem{rem}[thm]{\protect\remarkname}
  \theoremstyle{plain}
  \newtheorem{lem}[thm]{\protect\lemmaname}
  \theoremstyle{definition}
  \newtheorem{example}[thm]{\protect\examplename}
  \providecommand{\corollaryname}{Corollary}
  \providecommand{\examplename}{Example}
  \providecommand{\lemmaname}{Lemma}
  \providecommand{\propositionname}{Proposition}
  \providecommand{\remarkname}{Remark}
\providecommand{\theoremname}{Theorem}
\begin{document}
\begin{singlespace}

\title[Intersections of Cantor Sets]{On Intersections of Cantor Sets: Hausdorff Measure}
\end{singlespace}

\author{Steen Pedersen and Jason D. Phillips}

\address{Department of Mathematics, Wright State University, Dayton OH 45435.}

\email{steen@math.wright.edu}

\email{phillips.50@wright.edu}
\begin{abstract}
We establish formulas for bounds on the Haudorff measure of the intersection
of certain Cantor sets with their translates. As a consequence we
obtain a formula for the Hausdorff dimensions of these intersections. 
\end{abstract}
\maketitle
\tableofcontents{}

\section{\label{sec-1:Introduction}Introduction}

Let $n\ge3$ be an integer. Any real number $t\in\left[0,1\right]$
has at least one $n$\emph{-ary} \emph{representation} 
\[
t=0._{n}t_{1}t_{2}\cdots=\sum_{k=1}^{\infty}\frac{t_{k}}{n^{k}}
\]
where each $t_{k}$ is one of the digits $0,1,\ldots,n-1.$ Deleting
some element from the full digit set $\{0,1,\ldots n-1\}$ we get
a set of \emph{digits} $D:=\left\{ d_{k}\mid k=1,2,\ldots,m\right\} $
with $m<n$ digits $d_{k}<d_{k+1}$ and a corresponding \emph{deleted
digits Cantor set} 
\begin{equation}
C=C_{n,D}:=\left\{ \sum_{k=1}^{\infty}\frac{x_{k}}{n^{k}}\mid x_{k}\in D\text{ for all }k\in\mathbb{N}\right\} .\label{Sec-1-eq:C-defined}
\end{equation}
In this paper we investigate the Hausdorff dimension and measure of
the sets $C\cap\left(C+t\right),$ where $C+t:=\{x+t\mid x\in C\}.$
Since the problems we consider are invariant under translation we
will assume $d_{1}=0.$ 

We say that $D$ is \emph{uniform,} if $d_{k+1}-d_{k},$ $k=1,2,\ldots,m-1$
is constant and $\geq2.$ We say $D$ is \emph{regular,} if $D$ is
a subset of a uniform digit set. Finally, we say that $D$ is \emph{sparse,}
if $\left|\delta-\delta'\right|\geq2$ for all $\delta\neq\delta'$
in 
\[
\Delta:=D-D=\left\{ d_{j}-d_{k}\mid d_{j},d_{k}\in D\right\} .
\]
Clearly, a uniform set is regular and a regular set is sparse. The
set $D=\left\{ 0,5,7\right\} $ is sparse and not regular. We will
abuse the terminology and say $C_{n,D}$ is uniform, regular, or sparse
provided $D$ has the corresponding property. 

Previous studies of the sets $C\cap\left(C+t\right)$ include: 
\begin{itemize}
\item When $C=C_{3,\{0,2\}}$ is the middle thirds Cantor set a formula
for the Hausdorff dimension of $C\cap\left(C+t\right)$ can be found
in \cite{DaHu95} and in \cite{NeLi02}. Such a formula can also be
found in \cite{DaTi08} if $C$ is uniform and $d_{m}=n-1$, and in
\cite{KePe91} if $C$ is regular. In Corollary \ref{Sec-1-cor:calculate-dimension}
we establish a formula for the Hausdorff dimension for $C\cap\left(C+t\right)$
when $C$ is sparse. 
\item Let $F^{+}$ be the set of all $t\geq0$ such that $C\cap\left(C+t\right)$
is non-empty. For $0\leq\beta\leq1,$ let $F_{\beta}:=\left\{ t\in F^{+}\mid\dim\left(C\cap\left(C+t\right)\right)=\beta\log_{n}(m)\right\} ,$
where $\dim\left(C\cap\left(C+t\right)\right)$ is the Hausdorff dimension
of $C\cap\left(C+t\right).$ If $C$ is the middle thirds Cantor set
then $F^{+}=[0,1]$ and it is shown in \cite{Ha75,DaHu95,NeLi02}
that $F_{\beta}$ is dense in $F^{+}$ for all $0\leq\beta\leq1.$
This extended to regular set and to sets $C_{n,D}$ such that $D$
satisfies $d_{k+1}-d_{k}\geq2$ and $d_{m}<n-1$ in \cite{PePh11}.
It is also shown in \cite{PePh11} that $F_{\beta}$ is not dense
in $F^{+}$ for all $0\leq\beta\leq1$ for all deleted digits Cantor
sets $C_{n,D}.$ We address this problem for the Hausdorff measure
in place of the Hausdorff dimension when $D$ is sparse in Corollary
\ref{cor:F_b,A dense in F}. 
\item It is shown in \cite{Ha75,Ig03} that, if $C$ is the middle thirds
Cantor set, then the Hausdorff dimension of $C\cap\left(C+t\right)$
is $\frac{1}{3}\log_{3}(2)$ for Lebesgue almost all $t$ in the closed
interval $\left[0,1\right].$ This is is extended to all deleted digits
sets in \cite{KePe91}.
\item If $C$ is the middle thirds Cantor set, then $C\cap\left(C+t\right)$
is self-similar if and only if the sequence $\left\{ 1-\left|y_{k}\right|\right\} $
is strong periodic where $t=\sum_{k=1}^{\infty}\frac{2y_{k}}{3^{k}}$
and $y_{k}\in\left\{ -1,0,1\right\} $ for all $k$ by \cite{LYZ11}.
Thus, $C\cap\left(C+t\right)$ is not, in general, a self-similar
set. 
\item For the middle thirds Cantor set it is shown in \cite{DaHu95,NeLi02}
that $C\cap\left(C+t\right)$ has $\log_{3}(2)$--dimensional Hausdorff
measure $0$ or $\frac{1}{2^{k}}$ for some integer $k.$ This is
extended to $\log_{n}(m)$--dimensional Hausdorff measure for uniform
sets with $d_{m}=n-1$ in \cite{DaTi08}. In Theorem \ref{Sec-1-thm:estimate-measure}
we estimate the $s-$dimensional Hausdorff measure of $C\cap\left(C+t\right),$
when $D$ is sparse and $s$ is the Hausdorff dimension of $C\cap\left(C+t\right).$ 
\end{itemize}
Some of the cited papers only consider rational $t$ and some consider
Minkowski dimension in place of Hausdorff dimension. It is known,
see e.g., \cite{PePh11} for an elementary proof, that the (lower)
Minkowski dimensions of $C\cap\left(C+t\right)$ equals its Hausdorff
dimension. 

Palis \cite{Pa87} conjectured that for dynamically defined Cantor
sets typically the corresponding set $F^{+}$ either has Lebesgue
measure zero or contains an interval. The papers \cite{DeSi08,MSS09}
investigate this problem for random deleted digits sets and solve
it in the affirmative in the deterministic case. 

For $n$-ary representations $t=0._{n}t_{1}t_{2}\ldots$ with $t_{k}\in\{0,1,\ldots,n-1\},$
let $\left\lfloor t\right\rfloor _{k}:=\sum_{j=1}^{k}\frac{t_{j}}{n^{j}}=0._{n}t_{1}t_{2}\cdots t_{k}$
denote the \emph{truncation} of $t$ to the first $k$ $n$-ary places.
Note that the truncation of $t$ is unique, unless $t$ admits two
different $n$-ary representations.

The case where $t$ admits a finite $n$-ary representation is relatively
simple. In fact, Theorem \ref{Sec-3-thm:Finite-Representations} shows
that, if $t=0._{n}t_{1}t_{2}\cdots t_{k},$ then $C\cap\left(C+t\right)$
is a union of two, possibly empty, sets $A$ and $B,$ where $A$
is a finite disjoint union of sets of the form $\frac{1}{n^{k}}\left(C+h\right)$
and $B$ is a finite set. Consequently, we will focus on translations
$t$ that do not admit a finite $n$-ary representation. 

Let
\[
C_{k}:=\left\{ 0._{n}x_{1}x_{2}\ldots\mid x_{j}\in D\text{ for }1\le j\le k\right\} 
\]
for each $k,$ then $C_{0}=[0,1],$ 
\begin{equation}
C_{k+1}\subset C_{k},\text{ and }C=C_{n,D}=\bigcap_{k=0}^{\infty}C_{k}.\label{Sec2-eq:C-construction-by-intersections}
\end{equation}

Let $0\leq t\leq1$ be fixed. Let $J=\frac{1}{n^{k}}\left(C_{0}+h\right)$
be an interval contained in $C_{k}$ for some integer $h.$ We say
$J$ is in the \emph{interval case}, if it is also an interval in
$C_{k}+\left\lfloor t\right\rfloor _{k}.$ And we say $J$ is in the
\emph{potential interval} case, if $J+\frac{1}{n^{k}}$ is an interval
in $C_{k}+\left\lfloor t\right\rfloor _{k}.$ 
\begin{prop}
\label{Sec-1-prop:Either-or} Suppose $D$ is sparse. Let $0\leq t\leq1.$
If one of the intervals in $C_{k}$ is in the interval case, then
no interval in $C_{k}$ is in the potential interval case. If one
of the intervals in $C_{k}$ is in the potential interval case, then
no interval in $C_{k}$ is in the interval case. 
\end{prop}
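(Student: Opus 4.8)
The plan is to translate the two geometric cases into arithmetic conditions on a single integer that encodes $\left\lfloor t\right\rfloor _{k}$, and then to use sparseness to show these conditions cannot hold simultaneously. First I would coordinatize the two families of intervals: the interval $J=\frac{1}{n^{k}}\left(C_{0}+h\right)=\left[h/n^{k},(h+1)/n^{k}\right]$ is contained in $C_{k}$ precisely when $h\in\mathcal{D}_{k}:=\left\{ \sum_{j=1}^{k}x_{j}n^{k-j}\mid x_{j}\in D\right\} $, the integers whose length-$k$ base-$n$ digits all lie in $D$. Writing $T:=\sum_{j=1}^{k}t_{j}n^{k-j}$ so that $\left\lfloor t\right\rfloor _{k}=T/n^{k}$, the intervals comprising $C_{k}+\left\lfloor t\right\rfloor _{k}$ are exactly $\left[(h'+T)/n^{k},(h'+T+1)/n^{k}\right]$ with $h'\in\mathcal{D}_{k}$. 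Matching endpoints then yields two clean criteria: some interval of $C_{k}$ is in the interval case iff $T\in\mathcal{D}_{k}-\mathcal{D}_{k}$, and some interval of $C_{k}$ is in the potential interval case iff $T-1\in\mathcal{D}_{k}-\mathcal{D}_{k}$.

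Next I would observe that the two assertions of the proposition are contrapositives of one another, so it is enough to prove the single statement that $\mathcal{D}_{k}-\mathcal{D}_{k}$ contains no two consecutive integers. Since digitwise differences range independently over $\Delta=D-D$, one has $\mathcal{D}_{k}-\mathcal{D}_{k}=\left\{ \sum_{j=1}^{k}\delta_{j}n^{k-j}\mid\delta_{j}\in\Delta\right\} $, and the goal becomes the purely arithmetic assertion that any two distinct elements of this signed base-$n$ set differ by at least $2$.

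To establish that, I would take distinct $a=\sum_{j}\alpha_{j}n^{k-j}$ and $b=\sum_{j}\beta_{j}n^{k-j}$ with all $\alpha_{j},\beta_{j}\in\Delta$, let $j_{0}$ be the most significant index at which $\alpha_{j_{0}}\neq\beta_{j_{0}}$, and estimate $a-b$. Sparseness gives $\left|\alpha_{j_{0}}-\beta_{j_{0}}\right|\geq2$, so the $j_{0}$-place contributes magnitude at least $2n^{k-j_{0}}$ to $a-b$; and since every $\Delta$-digit lies in $\left[-d_{m},d_{m}\right]$ with $d_{m}\leq n-1$, the tail $\sum_{j>j_{0}}\delta_{j}n^{k-j}$ of either number has magnitude at most $d_{m}\frac{n^{k-j_{0}}-1}{n-1}\leq n^{k-j_{0}}-1$. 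Subtracting the two tails costs at most $2\left(n^{k-j_{0}}-1\right)$, so $\left|a-b\right|\geq2n^{k-j_{0}}-2\left(n^{k-j_{0}}-1\right)=2$, as needed.

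The main obstacle is that this dominant-digit estimate is tight: the two tails together consume exactly $2n^{k-j_{0}}-2$ of the gap $2n^{k-j_{0}}$ opened by the leading digit, leaving the margin $2$ with nothing to spare. It is therefore essential that sparseness supplies the full $\left|\alpha_{j_{0}}-\beta_{j_{0}}\right|\geq2$---a leading gap of $1$ would collapse the inequality---and that $d_{m}\leq n-1$ is used to control the tails; this is precisely where the hypothesis on $D$ enters, and it is why one should not expect the conclusion for arbitrary $D$. A minor point to keep clean is the endpoint bookkeeping when $J+\frac{1}{n^{k}}$ protrudes beyond $\left[0,1\right]$; this only changes which particular $J$ qualifies, not the existence criterion $T-1\in\mathcal{D}_{k}-\mathcal{D}_{k}$, so the reduction in the first step is unaffected.
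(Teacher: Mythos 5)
Your proof is correct, but it reaches the conclusion by a genuinely different route from the paper's. The paper obtains this proposition as a byproduct of Theorem \ref{Sec-4-Thm:Sparcity_Requirement}: sparseness gives $\Delta\cap\left(\Delta-1\right)=\varnothing$ and $\left(n-\Delta\right)\cap\left(n-\Delta-1\right)=\varnothing$, so in the transition analysis of Lemma \ref{Sec-4-Lemma:Transition} an interval in the interval (resp.\ potential interval) case cannot spawn children of both types, and an induction on $k$ starting from $\sigma_{t}(0)=1$, tracked by the bookkeeping functions $\xi$ and $\sigma_{t}$, shows the simultaneous case never arises. You instead argue directly at level $k$ with no induction: the left endpoints of the $n$-ary intervals of $C_{k}$ are $\mathcal{D}_{k}/n^{k}$, the two cases become the arithmetic statements $T\in\mathcal{D}_{k}-\mathcal{D}_{k}$ and $T\mp1\in\mathcal{D}_{k}-\mathcal{D}_{k}$, and your most-significant-digit estimate shows that any two distinct elements of $\mathcal{D}_{k}-\mathcal{D}_{k}=\left\{ \sum_{j}\delta_{j}n^{k-j}\mid\delta_{j}\in\Delta\right\} $ differ by at least $2$: the leading discrepant place contributes at least $2n^{k-j_{0}}$ while the two tails consume at most $2\left(n^{k-j_{0}}-1\right)$. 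That estimate is tight but valid, and it isolates exactly where sparseness and $d_{m}\le n-1$ enter. What each approach buys: the paper's induction costs nothing extra because the transition machinery is needed anyway for the counting function $\mu_{t}$ and the rest of the analysis; your argument is self-contained, independent of the history $t_{1},\ldots,t_{k}$, and proves the slightly stronger quantitative fact that $\mathcal{D}_{k}-\mathcal{D}_{k}$ contains no two integers at distance $1$. (The sign in your potential-interval criterion, $T-1$ versus $T+1$, depends on which of the paper's two inconsistent conventions --- Section \ref{sec-1:Introduction} versus Section \ref{sec-3} --- one adopts; since your key lemma excludes any pair of consecutive integers from $\mathcal{D}_{k}-\mathcal{D}_{k}$, this is immaterial.)
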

Suppose $D$ is sparse and $t=0._{n}t_{1}t_{2}\cdots.$ Let $\mu_{t}(0)=1$
and inductively $\mu_{t}(k+1)=\mu_{t}(k)\cdot\#\left(D-t_{k+1}\right)\cap\left(D\cup\left(D+1\right)\right)$
if one of the intervals in $C_{k}$ is in the interval case, $\mu_{t}(k+1)=\mu_{t}(k)\cdot\#\left(D-n+t_{k+1}\right)\cap\left(D\cup\left(D-1\right)\right)$
if one of the intervals in $C_{k}$ is in the potential interval case,
and $\mu_{t}(k+1)=0$ if no interval in $C_{k}$ is in the interval
or potential interval case. Here $\#B$ denotes the number of elements
in the finite set $B.$ Let $\nu_{t}(k):=\log_{m}\mu_{t}\left(k\right),$
$\beta_{t}:=\liminf_{k\to\infty}\frac{\nu_{t}(k)}{k},$ and $L_{t}:=\liminf_{k\to\infty}m^{\nu_{t}(k)-k\beta_{t}}.$
These numbers all depend on $n$ and $D,$ but we suppress this dependence
in the notation. A special case of Theorem \ref{thm:m^-bL<H<L} is 
\begin{thm}
\label{Sec-1-thm:estimate-measure}Let $C=C_{n,D}$ be a deleted digits
Cantor set. Suppose $D$ is sparse, $0<t<1$ does not admit a finite
$n$-ary representation, and $C\cap\left(C+t\right)$ is non-empty.
If $s:=\beta_{t}\log_{n}(m),$ then 
\[
m^{-\beta_{t}}L_{t}\leq\mathscr{H}^{s}\left(C\cap\left(C+t\right)\right)\leq L_{t},
\]
where $\mathscr{H}^{s}\left(C\cap\left(C+t\right)\right)$ is the
$s$-dimensional Hausdorff measure of $C\cap\left(C+t\right).$ 
\end{thm}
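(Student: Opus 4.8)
The plan is to read $\mu_t(k)$ as the number of intervals of length $n^{-k}$ in the natural covering of $C\cap\left(C+t\right)$, and then to prove the two inequalities separately by a covering argument and a mass distribution argument. First I would record two structural facts. By Proposition \ref{Sec-1-prop:Either-or} together with sparseness, at each level $k$ all of the surviving level-$k$ intervals lie in the same case (interval or potential interval), and the branching factor---being $\#\left(D-t_{k+1}\right)\cap\left(D\cup\left(D+1\right)\right)$ or $\#\left(D-n+t_{k+1}\right)\cap\left(D\cup\left(D-1\right)\right)$---depends only on $t_{k+1}$ and $D$, not on the particular interval. Hence the tree of surviving intervals is homogeneous: every surviving level-$k$ interval splits into the same number of level-$(k+1)$ intervals, there are exactly $\mu_t(k)$ surviving level-$k$ intervals, each of diameter $n^{-k}$, and their union covers $C\cap\left(C+t\right)$. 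Second, since $s=\beta_t\log_n m$ we have the identity $n^s=m^{\beta_t}$, so $\mu_t(k)\,(n^{-k})^s=m^{\nu_t(k)-k\beta_t}$; thus $L_t$ is precisely the $\liminf$ of the $s$-dimensional cost of the level-$k$ covering.

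For the upper bound I would choose a subsequence $k_j\to\infty$ along which $m^{\nu_t(k_j)-k_j\beta_t}\to L_t$. The $\mu_t(k_j)$ surviving level-$k_j$ intervals cover $C\cap\left(C+t\right)$ with mesh $n^{-k_j}\to0$, so $\mathscr{H}^s_{n^{-k_j}}\!\left(C\cap\left(C+t\right)\right)\le\mu_t(k_j)\,(n^{-k_j})^s=m^{\nu_t(k_j)-k_j\beta_t}$. Letting $j\to\infty$ and using $\mathscr{H}^s=\lim_{\delta\to0}\mathscr{H}^s_\delta$ gives $\mathscr{H}^s\!\left(C\cap\left(C+t\right)\right)\le L_t$.

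For the lower bound I would use the mass distribution principle. Define the uniform measure $\mu$ on the tree by assigning each surviving level-$k$ interval the mass $m^{-\nu_t(k)}=1/\mu_t(k)$; homogeneity makes these assignments consistent (the masses of the children of an interval sum to its own mass), so $\mu$ extends to a Borel probability measure supported on $C\cap\left(C+t\right)$. The crucial input is a separation estimate: sparseness forces $d_{i+1}-d_i\ge2$ (take $\delta'=0$ in the definition of $\Delta$), so consecutive level-$k$ intervals of $C_k$ are separated by gaps of length $\ge n^{-k}$. Since $C\cap\left(C+t\right)\subseteq C\subseteq C_k$, any set $U$ with $\operatorname{diam}U<n^{-k}$ meets at most one level-$k$ interval, whence $\mu(U)\le m^{-\nu_t(k)}$. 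For $U$ with $n^{-(k+1)}\le\operatorname{diam}U<n^{-k}$ we then have $(\operatorname{diam}U)^s\ge n^{-(k+1)s}=m^{-k\beta_t}n^{-s}$, so $\mu(U)/(\operatorname{diam}U)^s\le n^s\,m^{k\beta_t-\nu_t(k)}$. Since $m^{\nu_t(k)-k\beta_t}\ge L_t-\varepsilon$ for all large $k$, this is at most $n^s/(L_t-\varepsilon)$ once $\operatorname{diam}U$ is small; the mass distribution principle then yields $\mathscr{H}^s\!\left(C\cap\left(C+t\right)\right)\ge(L_t-\varepsilon)/n^s$, and letting $\varepsilon\to0$ together with $n^s=m^{\beta_t}$ gives $\mathscr{H}^s\!\left(C\cap\left(C+t\right)\right)\ge m^{-\beta_t}L_t$.

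The main obstacle is the lower bound, and within it two points require care. First, justifying the homogeneity of the tree and the resulting consistent construction of $\mu$, which rests on Proposition \ref{Sec-1-prop:Either-or} and on sparseness guaranteeing that the branching count is position-independent (the subtlety being that the potential interval case contributes points of $C\cap\left(C+t\right)$ after a shift, and one must check these still lie in the level-$k$ intervals of $C_k$ that $\mu$ charges). Second, extracting the sharp constant: the factor $m^{-\beta_t}=n^{-s}$ separating the two bounds is exactly the loss incurred because a set of diameter just under $n^{-k}$ is only guaranteed to meet a single level-$k$ interval while its $s$-cost can be as small as that at scale $n^{-(k+1)}$. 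I expect verifying that this single-interval separation estimate is what produces the factor $m^{-\beta_t}$, and that it cannot be improved by this method, to be the delicate step.
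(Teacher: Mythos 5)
Your proposal is correct in substance; the upper bound is exactly the paper's argument (cover $C\cap\left(C+t\right)$ by the $\mu_{t}(k)$ surviving $n$-ary intervals of $C_{k}$ and compute the $s$-cost, noting $\mu_{t}(k)\left(n^{-k}\right)^{s}=m^{\nu_{t}(k)-k\beta_{t}}$), but your lower bound takes a genuinely different route. The paper does not build a measure: it takes an arbitrary finite closed $\varepsilon$-cover $\left\{ U_{i}\right\}$, assigns each $U_{i}$ the scale $h_{i}$ with $n^{-\left(h_{i}+1\right)}\le\left|U_{i}\right|<n^{-h_{i}}$, shows each $U_{i}$ meets at most one surviving interval at level $h_{i}$ (the same separation fact you use), and then counts directly: since every surviving level-$k$ interval contains a point of $C\cap\left(C+t\right)$ by Lemma \ref{Sec-4-lem:Nothing_Goes_Away_1}, the collections $\mathcal{U}_{i}$ must exhaust all $m^{\nu_{t}(k)}$ surviving intervals, whence $1\le\sum_{i}m^{-\nu_{t}\left(h_{i}\right)}$ and the bound $m^{-\beta_{t}}\left(L_{t}-\delta\right)$ follows. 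Your mass distribution argument packages the identical counting into the measure $\mu$ and produces the same constant; it is arguably cleaner, and your separation estimate from $d_{j+1}-d_{j}\ge2$ is correct. The one point you should tighten is the assertion that $\mu$ is ``supported on $C\cap\left(C+t\right)$'': consistency of the mass assignments only yields a probability measure on $S:=\bigcap_{k}S_{k}$, where $S_{k}$ is the union of the surviving level-$k$ intervals, and the mass distribution principle needs $\mu\left(C\cap\left(C+t\right)\right)=1$, i.e.\ $S\subseteq C\cap\left(C+t\right)$. That containment is exactly where Lemma \ref{Sec-4-lem:Nothing_Goes_Away_1} enters (each surviving interval meets the closed set $C\cap\left(C+t\right)$, so every point of $S$ is a limit of points of it); the direction you flag as delicate in your closing paragraph, namely $C\cap\left(C+t\right)\subseteq S$, is the easy one and is Lemma \ref{Sec-3-Lem:empty-can-be-ignored}. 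Finally, your argument as written assumes $L_{t}<\infty$; for $L_{t}=\infty$ replace $L_{t}-\varepsilon$ by an arbitrary $M$ (the paper handles this case separately), and for $L_{t}=0$ the lower bound is vacuous.
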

\textcolor{black}{We also show, see Remark \ref{Sec-5:Remark-smaller-upper-bound},
that Lemma \ref{Sec-4-Lem:Interval_Count} leads to a smaller upper
bound at the expense of a more complicated expression for this upper
bound. We also present an example, Example \ref{ex:< M_t =00003D L_t},
showing that this smaller upper bound need not be equal to the Hausdorff
measure of $C\cap\left(C+t\right).$}
\begin{cor}
\label{Sec-1-cor:calculate-dimension}Let $C=C_{n,D}$ be a deleted
digits Cantor set. If $D$ is sparse, $0<t<1$ does not admit a finite
$n$-ary representation, and $C\cap\left(C+t\right)$ is non-empty,
then $C\cap\left(C+t\right)$ has Hausdorff dimension $\beta_{t}\log_{n}(m).$ 
\end{cor}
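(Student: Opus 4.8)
The goal is to identify $\beta_t\log_n(m)$ as the exponent at which $s\mapsto\mathscr{H}^s\bigl(C\cap(C+t)\bigr)$ drops from $+\infty$ to $0$. The fastest route is to feed Theorem \ref{Sec-1-thm:estimate-measure} the critical exponent $s_0:=\beta_t\log_n(m)$ directly: since $\beta_t\in[0,1]$ is finite and $m\ge2$, the factor $m^{-\beta_t}$ is a positive constant, so whenever $0<L_t<\infty$ the theorem yields $0<m^{-\beta_t}L_t\le\mathscr{H}^{s_0}\bigl(C\cap(C+t)\bigr)\le L_t<\infty$, and a set with positive and finite $\mathscr{H}^{s_0}$ measure has Hausdorff dimension exactly $s_0$. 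So the only remaining work is to cover the degenerate cases $L_t\in\{0,\infty\}$, and for these I would argue directly at the level of the dimension by testing exponents on either side of $s_0$.

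For the upper bound $\dim\bigl(C\cap(C+t)\bigr)\le s_0$, I would use the covering estimate underlying the upper bound of Theorem \ref{Sec-1-thm:estimate-measure}: for every $k$ the set $C\cap(C+t)$ is covered by the $\mu_t(k)$ surviving intervals of length $n^{-k}$, so for any $\beta$, setting $s=\beta\log_n(m)$,
\[
\mathscr{H}^{s}\bigl(C\cap(C+t)\bigr)\le\liminf_{k\to\infty}\mu_t(k)\,(n^{-k})^{s}=\liminf_{k\to\infty}m^{\nu_t(k)-k\beta}.
\]
Choosing $\beta>\beta_t$ and a subsequence $k_j$ with $\nu_t(k_j)/k_j\to\beta_t$, the exponent $\nu_t(k_j)-k_j\beta=k_j\bigl(\nu_t(k_j)/k_j-\beta\bigr)\to-\infty$, so the $\liminf$ is $0$ and $\mathscr{H}^{s}=0$. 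Letting $\beta\downarrow\beta_t$ gives $\dim\le\beta_t\log_n(m)$; note this half needs nothing beyond the validity of the interval count, which Proposition \ref{Sec-1-prop:Either-or} guarantees by forcing the interval and potential-interval cases to be mutually exclusive so that $\mu_t(k)$ really counts the surviving intervals.

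The reverse inequality $\dim\ge s_0$ is the real obstacle, and it is exactly the content of the lower bound in Theorem \ref{Sec-1-thm:estimate-measure} re-run below the critical exponent. I would place the natural probability measure $\lambda$ on $C\cap(C+t)$ assigning equal mass $1/\mu_t(k)$ to each surviving level-$k$ interval and apply the mass distribution principle. The key estimate is $\lambda(I)\le c\,|I|^{s}$ for every small interval $I$; estimating $\lambda(I)\approx m^{-\nu_t(k)}$ and $|I|^{s}\approx m^{-k\beta}$ for $|I|\approx n^{-k}$ gives $\lambda(I)/|I|^{s}\approx m^{-(\nu_t(k)-k\beta)}$, and for $\beta<\beta_t$ the inequality $\nu_t(k)/k\ge\beta_t-\varepsilon>\beta$ (valid for large $k$) forces $\nu_t(k)-k\beta\to+\infty$, hence $\lambda(I)/|I|^{s}$ is bounded. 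This yields $\mathscr{H}^{\beta\log_n(m)}\bigl(C\cap(C+t)\bigr)>0$ and, letting $\beta\uparrow\beta_t$, gives $\dim\ge\beta_t\log_n(m)$. The delicate point, and where sparseness is essential, is passing from the construction intervals to an \emph{arbitrary} interval $I$: sparseness ($|\delta-\delta'|\ge2$ on $\Delta=D-D$) separates the surviving level-$k$ intervals enough that any interval of length comparable to $n^{-k}$ meets only boundedly many of them, which is what makes the constant $c$ uniform. Combining the two inequalities gives $\dim\bigl(C\cap(C+t)\bigr)=\beta_t\log_n(m)$.
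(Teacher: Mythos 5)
Your proposal is correct and follows essentially the same route as the paper: the case $0<L_{t}<\infty$ is read off from Theorem \ref{Sec-1-thm:estimate-measure}, the upper bound on the dimension comes from covering $C\cap\left(C+t\right)$ by the $\mu_{t}(k)$ surviving $n$-ary intervals with a super-critical exponent, and the lower bound rests on the same two facts the paper uses, namely the uniform branching count of Lemma \ref{Sec-4-Lem:Interval_Count} and the $\frac{1}{n^{k}}$-separation of surviving intervals forced by $\sigma_{t}(k)=\pm1$. The only difference is presentational: the paper (in Corollary \ref{cor:H=00003Dbeta_t}) handles the degenerate cases by re-running the finite-subcover counting argument of Theorem \ref{thm:m^-bL<H<L} with a shifted exponent $\gamma<\beta_{t}$ to get $\mathscr{H}^{\gamma\log_{n}(m)}=\infty$, whereas you package that same counting as a mass distribution principle for the uniform measure on surviving intervals.
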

As noted above the sets $C\cap\left(C+t\right)$ are usually not self-similar.
In Example \ref{ex:L_s=00003D0 and L_r=00003Dinfty} we construct
$C$ and $t$ such that $C\cap\left(C+t\right)$ has Hausdorff dimension
$\beta\log_{n}(m)$ and $L=0$ or $L=\infty.$ In these cases $C\cap\left(C+t\right)$
is not self-similar and Theorem \ref{Sec-1-thm:estimate-measure}
provides a formula for the Hausdorff measure. We show, Theorem \ref{thm:Finite Measure},
that our proof of Theorem \ref{Sec-1-thm:estimate-measure} can be
modified to give the estimate $m^{-1}\leq\mathscr{H}^{s}\left(C\right)\leq1,$
where $s=\log_{n}(m).$ A formula for the Hausdorff measure of self-similar
sets is not known except in very special circumstances. However, the
papers \cite{AySt99,Ma86,Ma87} contain algorithms for calculating
the Hausdorff measure of self-similar subsets of the real line satisfying
an open set condition. Corollary \ref{Sec-5-cor:estimate-for-finite-t}
contain estimates on the Hausdorff measure of $C\cap\left(C+t\right)$
when $t$ admits a finite $n$-ary representation. 

In Section \ref{sec:Examples} we give examples showing that $\mathscr{H}^{s}\left(C\cap\left(C+t\right)\right)$
can but need not equal $L_{t}.$ We also present an example showing
that if $D$ is not sparse, then $\mathscr{H}^{s}\left(C\cap\left(C+t\right)\right)$
need not be in the interval $\left[m^{-\beta_{t}}L_{t},L_{t}\right].$ 

We refer to \cite{Fal85} for background information on Hausdorff
dimension, Hausdorff measure and self-similar sets. This paper is
based, in part, on the second named authors' thesis \cite{Phi11}.

\textcolor{black}{After this work was completed, we became aware of
earlier works \cite{FWW97} \cite{QRS01}, on these problems. These
papers consider a class of Cantor sets similar to but larger than
uniform deleted digits sets with $d_{m}=n.$ We refer to this class
as }\textcolor{black}{\emph{homogeneous Cantor sets}}\textcolor{black}{{}
and refer to the cited papers for the exact definition. The first
of these papers, \cite{FWW97}, establishes an estimate for homogeneous
Cantor sets, similar to our Theorem \ref{Sec-1-thm:estimate-measure}
with $t=0.$ The second of these papers, \cite{QRS01}, shows that,
for a smaller class of homogeneous Cantor sets, the upper bound in
\cite{FWW97} is in fact equal to the Hausdorff measure. It is likely
that these results, combined with the analysis in Sections \ref{sec-2}--\ref{sec-4},
can be used to establish a formula for the Hausdorff measure of $C\cap\left(C+t\right),$
when $C$ is a uniform deleted digits Cantor set and $d_{m}=n.$}

\section{A Construction of $C\cap(C+t)$\label{sec-2}}

In this section we assume $n\geq3$ is given and that $D=\left\{ d_{k}\mid k=1,2,\ldots\right\} $
is some digits set. We indicate how a natural method of construction
of $C$ can be used to analyze $C\cap\left(C+t\right)$. This contruction
form the basis for our analysis of $C\cap\left(C+t\right).$ 

The middle thirds Cantor set is often constructed by starting with
the closed interval $C_{0}=[0,1]$ and for each $k\geq0$ letting
$C_{k+1}$ be obtained from $C_{k}$ be removing the open middle of
each interval in $C_{k}.$ We show that $C=C_{n,D}$ can be constructed
in a similar manner. 

The \emph{refinement} of the interval $[a,b]$ is the set
\[
\bigcup_{j=1}^{m}\left[a+\frac{d_{j}}{n}\left(b-a\right),a+\frac{d_{j}+1}{n}\left(b-a\right)\right].
\]
The set $C_{k+1}$ is obtained from $C_{k}$ by refining each $n$-ary
interval in $C_{k}.$ For the middle thirds Cantor set refinement
of $C_{k}$ is the same as removing the open middle third from each
interval in $C_{k}.$ 

Since we are interested in studying $C\cap\left(C+t\right)$ only
$t$ such that $C\cap\left(C+t\right)$ is not empty are of interest.
Consequently we introduce the set 
\[
F:=\left\{ t\mid C\cap\left(C+t\right)\neq\varnothing\right\} .
\]

It is easy to see that $F$ is compact and $F=C-C$. As a result,
$F^{+}=F\cap[0,\infty)$ and $F=\left(-F\right)\cup F.$ Since $C\cap\left(C-t\right)$
is translate of $C\cap\left(C+t\right)$ it is sufficient to consider
$t\geq0.$ 
\begin{rem}
It is shown in \cite{PePh11} that $F$ is the compact set $\left\{ 0._{n}t_{1}t_{2}\cdots\mid t_{k}\in\Delta\right\} $.
In particular, $F$ is a self-similar set. Note the representations
$0._{n}t_{1}t_{2}\cdots$ with $t_{k}\in\Delta$ allows the digits
$t_{k}$ to be positive for some $k$ and negative for other $k.$
We will not need this construction of $F$ in this paper. 
\end{rem}
Fix $t=0._{n}t_{1}t_{2}\ldots$ in $[0,1].$ We split our analysis
of $C\cap\left(C+t\right)$ into three steps. First, we consider the
method of construction for the sets $C_{k}\cap\left(C_{k}+\left\lfloor t\right\rfloor _{k}\right)$.
Second, we establish a relationship between $C_{k}\cap\left(C_{k}+\left\lfloor t\right\rfloor _{k}\right)$
and $C_{k}\cap\left(C_{k}+t\right).$ Thirdly, this allows us to use
that (\ref{Sec2-eq:C-construction-by-intersections}) implies 
\begin{equation}
C\cap\left(C+t\right)=\bigcap_{k=0}^{\infty}\left(C_{k}\cap\left(C_{k}+t\right)\right)\label{Sec-2-eq:C_t-as-intersection-of- C_ks}
\end{equation}
to investigate $C\cap\left(C+t\right).$

\section{Analysis of $C_{k}\cap\left(C_{k}+\left\lfloor t\right\rfloor _{k}\right)$\label{sec-3}}

Given any $h\in\mathbb{Z}$ we say that the interval $J=\frac{1}{n^{k}}\left(C_{0}+h\right)$
is an \emph{$n$-ary interval} of length $\frac{1}{n^{k}}$\emph{.}
We will simply say $n$-ary interval when $k$ is understood from
the context. In particular, if $U$ is a compact set, the phrase \emph{an
$n$-ary interval of $U$} refers to an $n$-ary interval of length
$\frac{1}{n^{k}}$ contained in $U$ where $k$ is the smallest such
$k$. In particular, $C_{k}$ consists of $m^{k}$ disjoint $n$-ary
intervals. 

Fix $t=0._{n}t_{1}t_{2}\ldots$ in $[0,1].$ To construct $C_{k}\cap\left(C_{k}+\left\lfloor t\right\rfloor _{k}\right)$
we begin by generating $C_{k+1}$ by refining each $n$-ary interval
of $C_{k}$. Note that $\left\lfloor t\right\rfloor _{k}=\frac{h}{n^{k}}$
for some positive integer $h$ so that $C_{k}+\left\lfloor t\right\rfloor _{k}$
also consists of $n$-ary intervals. Thus, $C_{k+1}+\left\lfloor t\right\rfloor _{k+1}$
is generated by first refining each $n$-ary interval of $C_{k}+\left\lfloor t\right\rfloor _{k}$
and then translating these refined intervals by the positive factor
$\frac{t_{k+1}}{n^{k+1}}$. We say that $C_{k}\cap\left(C_{k}+\left\lfloor t\right\rfloor _{k}\right)$
\emph{transitions to} $C_{k+1}\cap\left(C_{k+1}+\left\lfloor t\right\rfloor _{k+1}\right)$
by first generating the sets $C_{k+1}$ and $C_{k+1}+\left\lfloor t\right\rfloor _{k+1}$
and then taking their intersection. 

Let $J\subset C_{k}$ be an arbitrary $n$-ary interval. Then $J$
can be classified using combinations of the following four cases:
(1) $J$ also in an $n$-ary interval in $C_{k}+\left\lfloor t\right\rfloor _{k},$
(2) the left hand end point of $J$ is the right hand end point of
some $n$-ary interval in $C_{k}+\left\lfloor t\right\rfloor _{k},$
(3) the right hand end point of $J$ is the left hand end point of
some $n$-ary interval in $C_{k}+\left\lfloor t\right\rfloor _{k},$
or (4) $J$ does not have any points in common with $C_{k}+\left\lfloor t\right\rfloor _{k}.$
More specifically, let $J$ be an $n$-ary interval in $C_{k}.$ 
\begin{enumerate}
\item We say $J$ is in the \emph{interval case,} if there exists an $n$-ary
interval $K\subset C_{k}+\left\lfloor t\right\rfloor _{k}$ such that
$J=K.$
\item We say $J$ is in the \emph{potential interval case,} if there exists
an $n$-ary interval $K\subset C_{k}+\left\lfloor t\right\rfloor _{k}$
such that $J=K+\frac{1}{n^{k}}$. 
\item We say $J$ is in the \emph{potentially empty case,} if there exists
an $n$-ary interval $K\subset C_{k}+\left\lfloor t\right\rfloor _{k}$
such that $J=K-\frac{1}{n^{k}}$. 
\item We say $J$ is in the \emph{empty case,} if $J\cap\left(C_{k}+\left\lfloor t\right\rfloor _{k}\right)=\varnothing$. 
\end{enumerate}
Any $n$-ary interval in $C_{k}$ is in one or more of the four cases
described above. An $n$-ary interval $J$ in $C_{k}$ may both in
the interval case and in the potential interval case, i.e., there
exists $n$-ary intervals $K_{I},K_{P}\subset\left(C_{k}+\left\lfloor t\right\rfloor _{k}\right)$
such that $K_{P}+\frac{1}{n^{k}}=J=K_{I}$. It is also possible for
$J$ to be in both the interval case and potentially empty case, or
to be both in the potential interval case and in the potentially empty
case. However, the intersections corresponding to the potentially
empty cases do not contribute points to $C\cap\left(C+t\right),$
when $0<t-\left\lfloor t\right\rfloor _{k}.$ Hence, we will not identify
these cases with special terminology. Finally, any $J$ in the empty
case cannot also be in any of the other cases. 

The idea of our method is to take $n$-ary interval in $C_{k}$ and
use the above classification to investigate the intersection $J\cap C\cap\left(C+t\right).$
The basic question is whether or not this intersection is non-empty?
whether or not repeated refinement of $J$ ''leads to'' points in
$C\cap\left(C+t\right)?$

\subsection{Finite $n$-ary Representations.}

We show that, if $t\in F^{+}$ admits a finite $n$-ary representation,
then $C\cap\left(C+t\right)$ is a union of finite sets and sets similar
to $C$. 
\begin{thm}
\label{Sec-3-thm:Finite-Representations}Suppose $t=0._{n}t_{1}t_{2}\cdots t_{k}$
is in $F^{+}.$ Then
\[
C\cap\left(C+t\right)=A\cup B
\]
where $A$ is empty or $A=\bigcup_{j}\frac{1}{n^{k}}\left(C+h_{j}\right)$
for a finite set of integers $h_{j}$ and $B$ is a finite, perhaps
empty, set. More precisely, each $n$-ary interval in $C_{k}$ that
is in the interval case gives rise to a term in the union in $A.$
If $d_{m}<n-1,$ then $B$ is empty. If $d_{m}=n-1,$ then (i) each
$n$-ary interval in $C_{k}$ that is in the potential interval case
and not in the potentially empty case gives rise to one point in $B$
(ii) each $n$-ary interval in $C_{k}$ that is in the potentially
empty case and not in the potential interval case gives rise to one
point in $B,$ and (iii) each $n$-ary interval in $C_{k}$ that both
is in the potential interval case and in the potentially empty case
gives rise to two point in $B.$ \end{thm}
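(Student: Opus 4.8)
The plan is to analyze the transition from $C_k \cap (C_k + \lfloor t\rfloor_k)$ to the full intersection $C\cap(C+t)$ by exploiting the fact that, for a finite representation $t = 0._n t_1\cdots t_k$, we have $\lfloor t\rfloor_j = t$ for all $j\ge k$, so the truncations stabilize. First I would observe that $C\cap(C+t) = \bigcap_{j\ge k}\bigl(C_j \cap (C_j + t)\bigr)$, and that once $j\ge k$ the translate $\lfloor t\rfloor_j$ no longer changes; the only further refinement happens inside the $n$-ary intervals already present in $C_k$. So the strategy is to classify each $n$-ary interval $J$ of $C_k$ according to the four cases of Section~\ref{sec-3} and track exactly which points of $C\cap(C+t)$ it contributes after all subsequent refinements.

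The main structural step is to handle the interval case. If $J \subset C_k$ equals some $n$-ary interval $K \subset C_k + t$, then on $J$ both sets $C$ and $C+t$ restrict to scaled copies of $C$: concretely $C\cap J = \frac{1}{n^k}(C + h)$ and $(C+t)\cap J$ is the same scaled copy, since the relative positions of $C$ and $C+t$ agree on $J$. Here I would use that refining $J$ in $C$ and refining the coincident interval in $C+t$ produce identical refinement patterns, because the digit shifts $t_{k+1}, t_{k+2}, \dots$ all vanish. Thus $J \cap C \cap (C+t) = \frac{1}{n^k}(C + h_j)$, which is precisely the term contributed to $A$. Summing over all interval-case $J$ gives $A = \bigcup_j \frac{1}{n^k}(C + h_j)$, a finite union indexed by the finitely many interval-case intervals in $C_k$.

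Next I would treat the boundary cases (potential interval and potentially empty), which are where the set $B$ comes from and where the dichotomy on $d_m$ enters. In the potential interval case, $J = K + \frac{1}{n^k}$ for some $n$-ary interval $K \subset C_k + t$, so $J$ and $K$ meet only at the single shared endpoint: the left endpoint of $J$ equals the right endpoint of $K$. Whether that endpoint actually survives into $C\cap(C+t)$ depends on whether $C$ contains the left endpoint of its interval (it always does, since $d_1 = 0$) and whether $C+t$ contains the right endpoint of the relevant interval, which happens exactly when $d_m = n-1$ (so that $C$ reaches the right endpoint $1$ of $[0,1]$). This is the key case distinction: if $d_m < n-1$, the right endpoints are never attained, the shared points drop out, and $B$ is empty; if $d_m = n-1$, each such shared endpoint is a genuine point of the intersection. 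The potentially empty case is symmetric, contributing the other endpoint, and an interval in both cases contributes two points -- giving the itemized count (i)--(iii).

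The hard part will be verifying the clean separation between $A$ and $B$: that the interval-case contributions are genuinely Cantor-like sets of positive dimension while the boundary-case contributions are isolated points, with no overlap or double-counting. Specifically I expect the delicate bookkeeping to be showing that an interval-case $J$ contributes its \emph{entire} scaled Cantor set (not just part of it) while a potential/potentially-empty-but-not-interval-case $J$ contributes \emph{only} a single endpoint and no interior Cantor structure. This is where I would lean on the \emph{sparse} hypothesis implicitly and on Proposition~\ref{Sec-1-prop:Either-or}, which guarantees that interval and potential-interval cases do not coexist across the level, so that the combinatorics of which endpoints are shared stays under control. The finiteness of $B$ then follows because $C_k$ has only $m^k$ intervals, each contributing at most two points to $B$.
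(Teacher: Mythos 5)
Your proposal follows essentially the same route as the paper: stabilize the truncations at level $k$, show that each interval-case $J$ refines to a full scaled copy $\frac{1}{n^{k}}\left(C+h\right)$ inside $C\cap\left(C+t\right)$, and reduce the potential interval and potentially empty cases to the question of whether the single shared endpoint survives, which happens precisely when $d_{m}=n-1$ (the paper phrases this as $J_{1}\cap K_{1}\neq\varnothing$ iff $d_{1}=1+d_{m}-n$, you phrase it as endpoint attainment; these are the same fact). One caveat: you propose to lean on the sparse hypothesis and Proposition \ref{Sec-1-prop:Either-or} to rule out coexisting cases, but Theorem \ref{Sec-3-thm:Finite-Representations} carries no sparseness assumption, so that proposition is not available here; fortunately it is also not needed, since the statement explicitly accommodates an interval lying in several cases at once (item (iii)) and the paper's subsequent remark notes that $A$ and $B$ need not be disjoint, so the ``clean separation'' you worry about is not something you have to prove.
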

\begin{proof}
Let $J_{0}$ be an $n$-ary interval in $C_{k}$ and let $h$ be the
integer for which $J_{0}=\frac{1}{n^{k}}\left(C_{0}+h\right).$ 

Suppose $J_{0}$ is in the interval case. For $j\geq0$ let $J_{j+1}$
be obtained from $J_{j}$ by refining each interval in $J_{j}.$ Since
$C_{\ell+1}$ is obtained from $C_{\ell}$ by refining each interval
in $C_{\ell},$ it follows that $J_{j}=\frac{1}{n^{k}}\left(C_{j}+h\right)$
for all $j\geq0.$ So (\ref{Sec2-eq:C-construction-by-intersections})
implies 
\begin{equation}
\bigcap_{j=0}^{\infty}J_{j}=\frac{1}{n^{k}}\left(C+h\right).\label{sec-3-eq:J-intersection}
\end{equation}

Consider the transition from $C_{k}\cap\left(C_{k}+\left\lfloor t\right\rfloor _{k}\right)$
to $C_{k+1}\cap\left(C_{k+1}+\left\lfloor t\right\rfloor _{k+1}\right).$
By assumption $J_{0}\subseteq C_{k}$ and $J_{0}\subseteq C_{k}+\left\lfloor t\right\rfloor _{k}.$
Applying the refinement process to all intervals gives $J_{1}\subseteq C_{k+1}$
and $J_{1}\subseteq C_{k+1}+\left\lfloor t\right\rfloor _{k}.$ Since
$\left\lfloor t\right\rfloor _{k}=\left\lfloor t\right\rfloor _{k+1}$
we conclude $J_{1}\subseteq C_{k+1}\cap\left(C_{k+1}+\left\lfloor t\right\rfloor _{k+1}\right)=C_{k+1}\cap\left(C_{k+1}+t\right).$
Repeating this argument shows that $J_{j}\subseteq C_{k+j}\cap\left(C_{k+j}+t\right)$
for all $j\geq0.$ Hence combining (\ref{sec-3-eq:J-intersection})
and (\ref{Sec-2-eq:C_t-as-intersection-of- C_ks}) we conclude 
\[
\frac{1}{n^{k}}\left(C+h\right)\subseteq C\cap\left(C+t\right).
\]
Thus any interval in $C_{k}$ that is in the interval case gives rise
to a ``small copy'' of $C$ in $C\cap\left(C+t\right).$ 

Suppose $J_{0}$ is in the potential interval case. Then $K_{0}:=J_{0}-\frac{1}{n^{k}}$
is an $n$-ary interval in $C_{k}+\left\lfloor t\right\rfloor _{k}.$
The refinements of $J_{0}$ and $K_{0}$ are 
\[
J_{1}=\bigcup_{p=1}^{m}\frac{1}{n^{k+1}}\left(C_{0}+nh+d_{p}\right)\text{ and }K_{1}=\bigcup_{p=1}^{m}\frac{1}{n^{k+1}}\left(C_{0}+nh+d_{p}-n\right).
\]
Since $C_{0}$ is a closed interval of length one, $0\leq d_{q}\leq d_{q+1}\leq n-1$,
$J_{1}\cap K_{1}$ is non-empty iff $d_{0}=1+d_{m}-n$ iff $d_{m}=n-1.$
In the affirmative case $J_{0}\cap K_{0}=J_{1}\cap K_{1}.$ Since
$t=\left\lfloor t\right\rfloor _{k}=\left\lfloor t\right\rfloor _{k+1}$
we have 
\[
C\cap\left(C+t\right)\supseteq\left(J_{0}\cap C\right)\cap\left(K_{0}\cap\left(C+t\right)\right)\supseteq\left(J_{1}\cap C\right)\cap\left(K_{1}\cap\left(C+t\right)\right).
\]
Hence, $J_{0}\cap K_{0}$ is a point in $C\cap\left(C+t\right)$ iff
$d_{m}=n-1$. 

The case where $J_{0}$ is in the potentially empty case is similar
to the case where $J_{0}$ is in the potential intervals case. 

Finally, suppose $J_{0}$ is in the empty case. Since $t=\left\lfloor t\right\rfloor _{k}$
it follows from (\ref{Sec2-eq:C-construction-by-intersections}) that
$J_{0}\cap\left(C+t\right)\subseteq J_{0}\cap\left(C_{k}+\left\lfloor t\right\rfloor _{k}\right).$
But the right hand side is the empty set by assumption. \end{proof}
\begin{rem}
The sets $A$ and $B$ in Theorem \ref{Sec-3-thm:Finite-Representations}
need not be disjoint. 
\end{rem}

\subsection{Infinite $n$-ary Representations}

Theorem \ref{Sec-3-thm:Finite-Representations} provides us with complete
information about $C\cap\left(C+t\right),$ when $t$ admits a finite
$n$-ary representation. Consequently, it remains to investigate $C\cap\left(C+t\right)$
when $t$ does not admit such a finite representation, i.e., when
\begin{equation}
0<t-\left\lfloor t\right\rfloor _{k}<\frac{1}{n^{k}}\text{ for all }k\geq1.\label{Sec-3-eq:t-non-teminating}
\end{equation}
Our next result shows that, if $t$ does not admit a finite $n$-ary
representation, then only interval and potential interval cases can
contribute points to $C\cap\left(C+t\right)$. 
\begin{lem}
\label{Sec-3-Lem:empty-can-be-ignored}Suppose $0<t-\left\lfloor t\right\rfloor _{k}<\frac{1}{n^{k}}$
for some $k$. If $J$ is an $n$-ary interval in $C_{k}$ and $J$
is either in the potentially empty or the empty case, then $J\cap\left(C_{k}+t\right)$
is empty, in particular, the intersection $J\cap C\cap\left(C+t\right)$
is empty. \end{lem}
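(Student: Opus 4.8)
The plan is to reduce everything to bookkeeping with integer indices and then to carry out one elementary overlap computation. Since $\lfloor t\rfloor_k=h'/n^k$ for an integer $h'$, both $C_k$ and $C_k+\lfloor t\rfloor_k$ are finite unions of the $n$-ary intervals $I_h:=[h/n^k,(h+1)/n^k]$; I would write $C_k+\lfloor t\rfloor_k=\bigcup_{h\in S}I_h$ for a set $S$ of integers and $J=I_{h_0}$ for the position $h_0$ of $J$ in $C_k$. By \eqref{Sec-3-eq:t-non-teminating} the quantity $\epsilon:=t-\lfloor t\rfloor_k$ satisfies $0<\epsilon<1/n^k$, and $C_k+t=\bigcup_{h\in S}(I_h+\epsilon)$. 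So the whole question becomes: for which $h\in S$ does the right-shifted interval $I_h+\epsilon$ meet $J=I_{h_0}$?

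The engine of the proof is this overlap computation. Both $J$ and $I_h+\epsilon$ have length $1/n^k$, and $I_h+\epsilon$ is $I_h$ shifted right by less than one interval length. Writing the two endpoint inequalities for $[h_0/n^k,(h_0+1)/n^k]$ to meet $[h/n^k+\epsilon,(h+1)/n^k+\epsilon]$ and multiplying through by $n^k$ reduces them to $n^k\epsilon\le h_0+1-h$ and $h_0-h-1\le n^k\epsilon$. Since $h_0,h$ are integers while $n^k\epsilon\in(0,1)$, these force $h\le h_0$ and $h\ge h_0-1$, respectively; hence $I_h+\epsilon$ meets $J$ exactly when $h\in\{h_0-1,h_0\}$. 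Consequently $J\cap(C_k+t)\neq\varnothing$ if and only if $h_0\in S$ or $h_0-1\in S$, that is, if and only if $J$ is in the interval case or the potential interval case.

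To deduce the lemma I would argue case by case. In the empty case $J$ shares no point with any $I_h$, $h\in S$; since $I_{h_0}$ and $I_h$ touch exactly when $|h-h_0|\le 1$, this gives $h_0-1,h_0,h_0+1\notin S$, so in particular $h_0-1,h_0\notin S$ and the criterion yields $J\cap(C_k+t)=\varnothing$. In the potentially empty case the interval witnessing the case is $K=J+1/n^k=I_{h_0+1}$; because $h_0+1$ lies outside $\{h_0-1,h_0\}$, its shift $K+\epsilon$ sits strictly to the right of $J$ and contributes no points, so this case produces no intersection of $J$ with $C_k+t$. Finally $C\cap(C+t)\subseteq C+t\subseteq C_k+t$ gives $J\cap C\cap(C+t)\subseteq J\cap(C_k+t)$, so emptiness of the latter yields emptiness of the former.

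The step requiring care is the two-sided strict bound $0<n^k\epsilon<1$: strict positivity (from $0<t-\lfloor t\rfloor_k$, i.e.\ $t$ non-terminating) is precisely what pushes the witnessing interval $I_{h_0+1}$ of the potentially empty case out of $J$, while the upper bound $<1/n^k$ prevents intervals at positions $\le h_0-2$ from reaching $J$. I would also flag that, because a single interval may lie in several cases simultaneously, the clean and robust statement is the if-and-only-if of the overlap computation; the lemma is its immediate corollary, obtained by checking that the witnesses of the empty and potentially empty cases fall outside $\{h_0-1,h_0\}$.
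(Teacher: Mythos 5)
Your proof is correct and follows essentially the same route as the paper: both reduce the claim to elementary arithmetic with the integer positions of the shifted $n$-ary intervals, exploiting the two strict bounds $0<n^{k}\left(t-\lfloor t\rfloor_{k}\right)<1$ in exactly the same way (positivity to push the potentially-empty witness off $J$, the upper bound to keep distant intervals from reaching $J$). Packaging the computation as the biconditional $h\in\{h_{0}-1,h_{0}\}$ is only a mild reorganization, though it has the virtue of making explicit the simultaneous-case caveat that the paper's own proof of the potentially empty case leaves implicit (it, too, only shows the witnessing interval contributes no points).
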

\begin{proof}
Suppose $t=0._{n}t_{1}t_{2}\cdots$ satisfies $0<t-\left\lfloor t\right\rfloor _{k}<\frac{1}{n^{k}}$
for some $k$. Let $J$ be an $n$-ary interval in $C_{k}$ and let
$K$ be an $n$-ary interval in $C_{k}+\left\lfloor t\right\rfloor _{k}$.
Pick integers $h_{J}$ and $h_{K}$ such that $J=\frac{1}{n^{k}}\left(C_{0}+h_{J}\right)$
and $K=\frac{1}{n^{k}}\left(C_{0}+h_{K}\right)$ 

Suppose $J$ is in the potentially empty case and $K$ is such that
$J=K-\frac{1}{n^{k}}$. Then $h_{J}=h_{K}-1.$ Hence, $0<t-\left\lfloor t\right\rfloor _{k}$
implies
\[
J\cap\left(K+\left(t-\left\lfloor t\right\rfloor _{k}\right)\right)=\frac{1}{n^{k}}\left(\left(C_{0}+h_{J}\right)\cap\left(C_{0}+h_{J}+1+\left(t-\left\lfloor t\right\rfloor _{k}\right)n^{k}\right)\right)=\varnothing,
\]
 since $C_{0}$ is an interval of length one. By (\ref{Sec-2-eq:C_t-as-intersection-of- C_ks})
this intersection does not contribute any points to $C\cap\left(C+t\right)$.

Suppose $J$ is in the empty case. Let $K\subset C_{k}+\left\lfloor t\right\rfloor _{k}$
be an arbitrary $n$-ary interval. Since $J$ is a minimum distance
of $\frac{1}{n^{k}}$ from $K$, then $K+\left(t-\left\lfloor t\right\rfloor _{k}\right)$
is at least a distance $\frac{1}{n^{k}}-\left(t-\left\lfloor t\right\rfloor _{k}\right)>0$
from $J$. Hence, $J\cap\left(C_{k}+t\right)=\varnothing$ and $J$
does not contain any points of $C\cap\left(C+t\right)$. \end{proof}
\begin{rem}
\label{Sec-3-Remark:lengts-of-intersections} The arguments from the
proof of Lemma \ref{Sec-3-Lem:empty-can-be-ignored} also give some
information about the interval and potential interval cases when $t$
does not admit a finite $n$-ary representation. More precisely, suppose
$J$ is in the interval case and $K\subset C_{k}+\left\lfloor t\right\rfloor _{k}$
is an $n$-ary interval such that $K=J$. Since $t-\left\lfloor t\right\rfloor _{k}<\frac{1}{n^{k}}$
then $J\cap\left(K+\left(t-\left\lfloor t\right\rfloor _{k}\right)\right)$
is an interval of length $\frac{1}{n^{k}}-\left(t-\left\lfloor t\right\rfloor _{k}\right)>0$
contained in $C_{k}\cap\left(C_{k}+t\right)$ which therefore may
contain points of $C\cap\left(C+t\right)$.

Suppose $J$ is in the potential interval case and $K$ is an $n$-ary
interval in $C_{k}+\left\lfloor t\right\rfloor _{k}$ such that $K+\frac{1}{n^{k}}=J$.
Since $0<\left(t-\left\lfloor t\right\rfloor _{k}\right),$ then $J\cap\left(K+t-\left\lfloor t\right\rfloor _{k}\right)$
is an interval of length $t-\left\lfloor t\right\rfloor _{k}$ and
this intersection may therefore contain points of $C\cap\left(C+t\right)$.
\end{rem}

\section{Analysis of the transition from $C_{k}\cap\left(C_{k}+\left\lfloor t\right\rfloor _{k}\right)$
to $C_{k+1}\cap\left(C_{k+1}+\left\lfloor t\right\rfloor _{k+1}\right)$\label{sec-4}}

Fix $t=0._{n}t_{1}t_{2}\ldots$ in $[0,1].$ We begin by considering
what happens to an $n$-ary interval $J$ in $C_{k}$ that is in the
interval case or the potential intervals case when we transition from
$C_{k}\cap\left(C_{k}+\left\lfloor t\right\rfloor _{k}\right)$ to
$C_{k+1}\cap\left(C_{k+1}+\left\lfloor t\right\rfloor _{k+1}\right).$
\begin{lem}
\label{Sec-4-Lemma:Transition} Let $J\subset C_{k}$ and $K\subset C_{k}+\left\lfloor t\right\rfloor _{k}$
be $n$-ary intervals and let $t=0._{n}t_{1}t_{2}\ldots$ be some
point in $[0,1].$ 
\begin{enumerate}
\item Suppose $J=K.$ (Interval case)

\begin{enumerate}
\item If $t_{k+1}$ is in $\Delta,$ then exactly $\#D\cap\left(D+t_{k+1}\right)$
of the intervals in the refinement of $J$ are in the interval case.
\item If $t_{k+1}$ is in $\Delta-1,$ then exactly $\#D\cap\left(D+t_{k+1}+1\right)$
of the intervals the refinement of $J$ are in the potential interval
case. 
\item If $t_{k+1}$ is neither in $\Delta$ nor in $\Delta-1,$ then all
intervals in the refinement of $J$ are either in the empty case or
in the potentially empty case. 
\end{enumerate}
\item Suppose $J=K+\frac{1}{n^{k}}.$ (Potential interval case)

\begin{enumerate}
\item If $t_{k+1}$ is in $n-\Delta,$ then exactly $\#D\cap\left(D+n-t_{k+1}\right)$
of the intervals in the refinement of $J$ are in the interval case.
\item If $t_{k+1}$ is in $n-\Delta-1,$ then exactly $\#D\cap\left(D+n-t_{k+1}-1\right)$
of the intervals the refinement of $J$ are in the potential interval
case. 
\item If $t_{k+1}$ is neither in $n-\Delta$ nor in $n-\Delta-1,$ then
all intervals in the refinement of $J$ are either in the empty case
or in the potentially empty case.
\end{enumerate}
\end{enumerate}
\end{lem}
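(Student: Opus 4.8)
The plan is to prove this by an entirely explicit computation on a single refinement step, matching the left endpoints of the refined subintervals; the statement carries no hypothesis on $D$ beyond $d_{1}=0$, so it is purely combinatorial and no limiting or measure-theoretic input is needed. First I would set up notation uniformly. Writing $J=\frac{1}{n^{k}}\left(C_{0}+h\right)$, the refinement of $J$ is the collection of $m$ intervals $I_{p}:=\frac{1}{n^{k+1}}\left(C_{0}+nh+d_{p}\right)$, $p=1,\dots,m$. On the translated side I would use $\left\lfloor t\right\rfloor _{k+1}=\left\lfloor t\right\rfloor _{k}+\frac{t_{k+1}}{n^{k+1}}$, so that the description in Section \ref{sec-3} (refine each interval of $C_{k}+\left\lfloor t\right\rfloor _{k}$, then translate by $\frac{t_{k+1}}{n^{k+1}}$) places the refined images of $K=\frac{1}{n^{k}}\left(C_{0}+h_{K}\right)$ at the integer left-endpoint positions $nh_{K}+d_{q}+t_{k+1}$, $q=1,\dots,m$, inside $C_{k+1}+\left\lfloor t\right\rfloor _{k+1}$. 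Because each $I_{p}$ is an $n$-ary interval of length $n^{-(k+1)}$, it is determined by its left endpoint, so every case test reduces to an equation between these integer positions.

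For part (1), where $J=K$ and hence $h_{K}=h$, the interval $I_{p}$ is in the interval case iff $nh+d_{p}=nh+d_{q}+t_{k+1}$ for some $q$, i.e. $d_{p}-t_{k+1}=d_{q}\in D$; the number of such $p$ is $\#\left(D\cap\left(D+t_{k+1}\right)\right)$, which is positive exactly when $t_{k+1}\in D-D=\Delta$, proving (1a). Likewise $I_{p}$ is in the potential interval case iff its left neighbor, at position $nh+d_{p}-1$, is one of the translated refined intervals, i.e. $d_{p}-1=d_{q}+t_{k+1}$; this yields the count $\#\left(D\cap\left(D+t_{k+1}+1\right)\right)$, positive exactly when $t_{k+1}+1\in\Delta$, i.e. $t_{k+1}\in\Delta-1$, proving (1b). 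For (1c), if $t_{k+1}$ lies in neither $\Delta$ nor $\Delta-1$ both counts vanish, so no refined interval is in the interval or potential interval case; since (as noted in Section \ref{sec-3}) every $n$-ary interval lies in at least one of the four cases, each refined interval must be in the empty or potentially empty case.

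Part (2) is the same computation shifted: $J=K+\frac{1}{n^{k}}$ forces $h_{K}=h-1$, so the translated refined intervals now sit at positions $nh-n+d_{q}+t_{k+1}$. The interval-case equation becomes $d_{p}=d_{q}+t_{k+1}-n$, i.e. $d_{q}-d_{p}=n-t_{k+1}$, requiring $n-t_{k+1}\in\Delta$, i.e. $t_{k+1}\in n-\Delta$, with the number of admissible $d_{q}$ equal to $\#\left(D\cap\left(D+\left(n-t_{k+1}\right)\right)\right)$, proving (2a); the potential-interval-case equation is $d_{q}-d_{p}=n-t_{k+1}-1$, giving $t_{k+1}\in n-\Delta-1$ and the count $\#\left(D\cap\left(D+\left(n-t_{k+1}-1\right)\right)\right)$, proving (2b), and (2c) follows exactly as (1c). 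The one place that needs care, and the only place I expect an error could creep in, is the bookkeeping of the integer shift attached to $K$ (namely $0$ in the interval case versus $-n$ in the potential interval case) together with the extra $-1$ that distinguishes the potential interval subcase from the interval subcase. I would also record once the elementary identity $\#\left(D\cap\left(D+a\right)\right)=\#\left(D\cap\left(D-a\right)\right)$, proved by the bijection $x\mapsto x-a$, so that counting the admissible $d_{q}$ (rather than the admissible $d_{p}$) in part (2) delivers exactly the symmetric form $\#\left(D\cap\left(D+\left(n-t_{k+1}\right)\right)\right)$ stated in the lemma.
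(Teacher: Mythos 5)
Your proposal is correct and follows essentially the same route as the paper's proof: both reduce each case to an explicit comparison of the refined subintervals $\frac{1}{n^{k+1}}\left(C_{0}+nh_{J}+d_{p}\right)$ of $J$ with the translated refined subintervals of $K$ (left endpoints at $nh_{K}+d_{q}+t_{k+1}$), yielding the equations $d_{p}=d_{q}+t_{k+1}$, $d_{p}=d_{q}+t_{k+1}+1$ in part (1) and $n+d_{p}=d_{q}+t_{k+1}$, $n+d_{p}=d_{q}+t_{k+1}+1$ in part (2), from which the stated counts follow. Your explicit note on the symmetry $\#\left(D\cap\left(D+a\right)\right)=\#\left(D\cap\left(D-a\right)\right)$, needed to pass from counting admissible $p$ to the form stated in part (2), is a detail the paper leaves implicit but is a welcome addition.
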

\begin{proof}
Let $h_{J}$ and $h_{K}$ be integers such that $J=\frac{1}{n^{k}}\left(C_{0}+h_{J}\right)$
and $K=\frac{1}{n^{k}}\left(C_{0}+h_{K}\right)$ and let 
\begin{align*}
J(p) & :=\frac{1}{n^{k+1}}\left(C_{0}+h_{J}n+d_{p}\right)\text{ and }\\
K(q) & :=\frac{1}{n^{k+1}}\left(C_{0}+h_{K}n+d_{q}\right)
\end{align*}
for $p,q=1,2,\ldots,m.$ Then the refinements of $J$ and $K$ are
$\bigcup{}_{p=1}^{m}J(p)$ and $\bigcup{}_{q=1}^{m}K(q).$ 

Suppose $J=K,$ then $h_{J}=h_{K}$ Hence $J(p)=K(q)+\frac{t_{k+1}}{n^{k+1}}$
iff $d_{p}=d_{q}+t_{k+1}$ and $J(p)=K(q)+\frac{t_{k+1}}{n^{k+1}}+\frac{1}{n^{k+1}}$
iff $d_{p}=d_{q}+t_{k+1}+1$. This establishes the interval case. 

Suppose $J=K+\frac{1}{n^{k}},$ then $h_{J}=h_{K}+1.$ So $J(p)=K(q)+\frac{t_{k+1}}{n^{k+1}}$
iff $n+d_{p}=d_{q}+t_{k+1}$ and $J(p)=K(q)+\frac{t_{k+1}}{n^{k+1}}+\frac{1}{n^{k+1}}$
iff $n+d_{p}=d_{q}+t_{k+1}+1$. This establishes the interval case.
\end{proof}
To describe our analysis of the sets $C_{k}\cap\left(C_{k}+\left\lfloor t\right\rfloor _{k}\right)$
we introduce appropriate terminology. 
\begin{itemize}
\item $C_{k}\cap\left(C_{k}+\left\lfloor t\right\rfloor _{k}\right)$ is
in the \emph{interval case,} if there exists an $n$-ary interval
$J\subset C_{k}$ in the interval case and no $n$-ary interval $K\subset C_{k}$
is in the potential interval case or simultaneous case.
\item $C_{k}\cap\left(C_{k}+\left\lfloor t\right\rfloor _{k}\right)$ is
in the \emph{potential interval case,} if there exists $J\subset C_{k}$
in the potential interval case and no $n$-ary interval $K\subset C_{k}$
is in the interval case or simultaneous case.
\item $C_{k}\cap\left(C_{k}+\left\lfloor t\right\rfloor _{k}\right)$ is
in the \emph{simultaneous case,} if there exist $J_{I},J_{P}\subset C_{k}$
such that $J_{I}$ is in the interval case and $J_{P}$ is in the
potential interval case.
\item $C_{k}\cap\left(C_{k}+\left\lfloor t\right\rfloor _{k}\right)$ is
in the \emph{irrecoverable case,} if $J$ is in the empty or potentially
empty case for all $n$-ary intervals $J\subset C_{k}$.
\end{itemize}
Our next goal is to introduce a function whose values determine whether
$C_{k}\cap\left(C_{k}+\left\lfloor t\right\rfloor _{k}\right)$ is
in the interval, potential interval, simultaneous, or irrecoverable
case. Since $C_{0}\cap\left(C_{0}+\left\lfloor t\right\rfloor _{0}\right)=\left[0,1\right]$,
then we begin in the interval case and can examine transitions using
inductively. The following constructions are motivated by Lemma \ref{Sec-4-Lemma:Transition}.
Let $i:=\sqrt{-1}$ and let
\[
\xi:\left\{ 0,\pm1,i\right\} \times\left\{ 0,1,\ldots,n-1\right\} \rightarrow\left\{ 0,\pm1,\pm i\right\} 
\]
be determined by
\begin{align*}
\xi\left(0,h\right) & :=0\\
\xi\left(1,h\right) & :=\begin{cases}
1 & \text{if }h\text{ is in }\Delta\text{ but not in }\Delta-1\\
-1 & \text{if }h\text{ is in }\Delta-1\text{ but not in }\Delta\\
i & \text{if }h\text{ is both in }\Delta\text{ and }\Delta-1\\
0 & \text{otherwise}
\end{cases}\\
\xi\left(-1,h\right) & :=\begin{cases}
-1 & \text{if }h\text{ is in }n-\Delta\text{ but not in }n-\Delta-1\\
1 & \text{it }h\text{ is in }n-\Delta-1\text{ but not in }n-\Delta\\
-i & \text{if }h\text{ is both in }n-\Delta\text{ and in }n-\Delta-1\\
0 & \text{otherwise}
\end{cases}\\
\xi\left(i,h\right) & :=\begin{cases}
-i & \text{if }h\text{ is in }\Delta\cup\left(n-\Delta\right)\text{ but not in }\left(\Delta-1\right)\cup\left(n-\Delta-1\right)\\
i & \text{if }h\text{ is in }\left(\Delta-1\right)\cup\left(n-\Delta-1\right)\text{ but not in }\Delta\cup\left(n-\Delta\right)\\
1 & \text{if }h\text{ is both in }\Delta\cup\left(n-\Delta\right)\text{ and in }\left(\Delta-1\right)\cup\left(n-\Delta-1\right)\\
0 & \text{otherwise}.
\end{cases}
\end{align*}
The function $\xi\left(z,h\right)$ is completely determined by $D$
and $n$. Let $\sigma_{t}:\mathbb{N}_{0}\to\left\{ 0,\pm1,i\right\} $
be determined by
\begin{align*}
\sigma_{t}(0) & :=1\text{ and inductively }\\
\sigma_{t}\left(k+1\right) & :=\xi\left(\sigma_{t}\left(k\right),t_{k+1}\right)\cdot\sigma_{t}\left(k\right)\text{ for }k\geq0.
\end{align*}
By construction of $\xi$ we have $\sigma_{t}\left(k\right)\in\left\{ 0,\pm1,i\right\} $
for all $k\geq0.$ 
\begin{lem}
\label{Sec-4-Lem:Sigma-property}Let $t=0._{n}t_{1}t_{2}\cdots$ be
some point in $[0,1].$ Then $C_{k}\cap\left(C_{k}+\left\lfloor t\right\rfloor _{k}\right)$
is in the interval case iff $\sigma_{t}\left(k\right)=1$, the potential
interval case iff $\sigma_{t}\left(k\right)=-1$, the simultaneous
case iff $\sigma_{t}\left(k\right)=i$, and the irrecoverable case
iff $\sigma_{t}\left(k\right)=0$.\end{lem}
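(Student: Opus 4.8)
The plan is to argue by induction on $k$, using Lemma \ref{Sec-4-Lemma:Transition} to track how the refinement process redistributes $n$-ary intervals among the four cases. For the base case $k=0$ we have $\sigma_t(0)=1$ and $C_0\cap\left(C_0+\left\lfloor t\right\rfloor_0\right)=[0,1]$, whose unique $n$-ary interval equals itself in $C_0+\left\lfloor t\right\rfloor_0=[0,1]$ and is not a rightward translate of any interval of $C_0$; hence it is in the interval case but not the potential interval case, so the classification holds at $k=0$. Throughout I will use that the case of $C_k\cap\left(C_k+\left\lfloor t\right\rfloor_k\right)$ is determined by two yes/no questions, namely whether some $n$-ary interval of $C_k$ is in the interval case and whether some $n$-ary interval is in the potential interval case; the four combinations are exactly the interval, potential interval, simultaneous, and irrecoverable cases, so $\sigma_t(k)\in\left\{1,-1,i,0\right\}$ is a well-defined classifier.

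The essential preliminary is a converse to Lemma \ref{Sec-4-Lemma:Transition}: every $n$-ary interval of $C_{k+1}$ that is in the interval or potential interval case is the refinement of an $n$-ary interval of $C_k$ that is itself in the interval or potential interval case. I would prove this by the index computation in the lemma's proof: writing $J(p)=\frac{1}{n^{k+1}}\left(C_0+h_Jn+d_p\right)$ and $K(q)+\frac{t_{k+1}}{n^{k+1}}=\frac{1}{n^{k+1}}\left(C_0+h_Kn+d_q+t_{k+1}\right)$, the equations $d_p+h_Jn=d_q+t_{k+1}+h_Kn+\varepsilon$ with $\varepsilon\in\left\{0,1\right\}$ force $(h_J-h_K)n$ to lie in an interval of length $<2n$ containing $0$, so $h_J-h_K\in\left\{0,1\right\}$, i.e. $J=K$ or $J=K+\frac{1}{n^k}$. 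Combined with the equivalences $\#D\cap(D+t_{k+1})\geq1\iff t_{k+1}\in\Delta$ and $\#D\cap(D+t_{k+1}+1)\geq1\iff t_{k+1}\in\Delta-1$ (and their reflected analogues for $n-\Delta$), this shows that the presence of interval-case, respectively potential-interval-case, intervals at level $k+1$ is governed precisely by membership of $t_{k+1}$ in the translates of $\Delta$ appearing in Lemma \ref{Sec-4-Lemma:Transition}.

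For the inductive step I assume $\sigma_t(k)$ classifies $C_k\cap\left(C_k+\left\lfloor t\right\rfloor_k\right)$ and treat the four values separately, in each case using the converse to see that interval/potential-interval intervals at level $k+1$ arise only from the relevant intervals at level $k$. If $\sigma_t(k)=1$, only part (1) of Lemma \ref{Sec-4-Lemma:Transition} contributes, producing interval-case intervals iff $t_{k+1}\in\Delta$ and potential-interval-case intervals iff $t_{k+1}\in\Delta-1$; these four possibilities match the branches of $\xi(1,\cdot)$, and $\sigma_t(k+1)=\xi(1,t_{k+1})\cdot1$ reads off $1,-1,i,0$ correctly. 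If $\sigma_t(k)=-1$, only part (2) contributes, giving interval-case intervals iff $t_{k+1}\in n-\Delta$ and potential-interval-case intervals iff $t_{k+1}\in n-\Delta-1$, and multiplying $\xi(-1,t_{k+1})$ by $-1$ yields the asserted value. If $\sigma_t(k)=i$ there are intervals $J_I$ and $J_P$ of both types, so both parts of Lemma \ref{Sec-4-Lemma:Transition} apply: interval-case intervals appear iff $t_{k+1}\in\Delta\cup(n-\Delta)$ and potential-interval-case intervals appear iff $t_{k+1}\in(\Delta-1)\cup(n-\Delta-1)$, matching $\xi(i,\cdot)$, and the arithmetic $\xi(i,t_{k+1})\cdot i$ (using $i^2=-1$) reproduces $1,-1,i,0$. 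Finally, if $\sigma_t(k)=0$ the converse shows no interval can re-enter the interval or potential interval case, so the irrecoverable case persists, consistent with $\sigma_t(k+1)=\xi(0,t_{k+1})\cdot0=0$.

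The main obstacle I anticipate is the bookkeeping in the simultaneous case: one must verify that interval-case and potential-interval-case intervals at level $k+1$ can be produced by refining \emph{either} $J_I$ \emph{or} $J_P$, so that their presence is governed by the \emph{unions} $\Delta\cup(n-\Delta)$ and $(\Delta-1)\cup(n-\Delta-1)$ rather than by either set alone, and then confirm that the complex multiplication defining $\sigma_t$ encodes this disjunction as the passage from $i$ to $\pm1$, $i$, or $0$. The remaining points—that the four cases are exhaustive and mutually exclusive, and that the nonemptiness of each count coincides with the corresponding membership in a translate of $\Delta$—are routine once the converse is in hand.
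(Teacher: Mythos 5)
Your proof is correct and follows the same route the paper intends: the paper dismisses this lemma in one line as ``a simple consequence of Lemma \ref{Sec-4-Lemma:Transition} and our construction of $\sigma$,'' and your induction simply makes that explicit. The one ingredient you add that the paper leaves implicit --- the converse showing via the bound $h_J-h_K\in\{0,1\}$ that no interval- or potential-interval-case interval at level $k+1$ can arise from an empty or potentially empty interval at level $k$ --- is genuinely needed for the ``only if'' directions and is argued correctly.
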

\begin{proof}
This is a simple consequence of Lemma \ref{Sec-4-Lemma:Transition}
and our construction of $\sigma.$ 
\end{proof}
We now show that $D$ is sparse iff every $t\geq0$ in $F$ has an
$n$-ary representation such that for all $k\geq0$ the set $C_{k}\cap\left(C_{k}+\left\lfloor t\right\rfloor _{k}\right)$
is either in the interval case or in the potential interval case. 
\begin{thm}
\label{Sec-4-Thm:Sparcity_Requirement}Let $C=C_{n,D}$ be a deleted
digits Cantor set. Then 
\[
F^{+}=\left\{ t\in[0,1]\mid\sigma_{t}\left(k\right)=\pm1\text{ for all }k\in\mathbb{N}\right\} 
\]
iff $D$ is sparse. \end{thm}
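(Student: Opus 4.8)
The plan is to convert the analytic condition $t\in F^{+}$, i.e. $C\cap(C+t)\neq\varnothing$, into the purely combinatorial condition on $\sigma_{t}$, using the classification of $n$-ary intervals together with the dynamics of $\xi$. Throughout I read $\sigma_{t}(k)=\pm1$ as ``$C_{k}\cap(C_{k}+\lfloor t\rfloor_{k})$ is in the interval or potential interval case'' via Lemma \ref{Sec-4-Lem:Sigma-property}, and I recall from (\ref{Sec-2-eq:C_t-as-intersection-of- C_ks}) that $C\cap(C+t)=\bigcap_{k}\bigl(C_{k}\cap(C_{k}+t)\bigr)$ is a nested intersection of compact sets.

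First I would prove the inclusion $\{t:\sigma_{t}(k)=\pm1\ \forall k\}\subseteq F^{+}$, which holds for every $D$, sparse or not. If $\sigma_{t}(k)=\pm1$ for all $k$, then by Remark \ref{Sec-3-Remark:lengts-of-intersections} each set $C_{k}\cap(C_{k}+t)$ contains an interval of positive length, hence is non-empty; as these sets are nested and compact, their intersection $C\cap(C+t)$ is non-empty, so $t\in F^{+}$. For the reverse inclusion under the hypothesis that $D$ is sparse, suppose $t\in F^{+}$. Sparsity rules out the simultaneous case by Proposition \ref{Sec-1-prop:Either-or}, so $\sigma_{t}(k)\neq i$ for all $k$. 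To exclude $\sigma_{t}(k)=0$ when $t$ has no finite representation, I invoke Lemma \ref{Sec-3-Lem:empty-can-be-ignored}: an irrecoverable level contains only empty and potentially empty intervals, none of which contribute points to $C\cap(C+t)$, forcing $C\cap(C+t)=\varnothing$, a contradiction. Hence $\sigma_{t}(k)=\pm1$ for all $k$, and the two sets coincide.

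For the converse I argue the contrapositive: if $D$ is not sparse I exhibit a $t\in F^{+}$ that is not in the right-hand set. Non-sparseness gives distinct $\delta,\delta'\in\Delta$ with $|\delta-\delta'|=1$; by the symmetry $\Delta=-\Delta$ I may choose $0\le h\le n-2$ with both $h$ and $h+1$ in $\Delta$, so that $h\in\Delta\cap(\Delta-1)$ and $\xi(1,h)=i$. Taking $t_{1}=h$ then produces $\sigma_{t}(1)=i$ via Lemma \ref{Sec-4-Lemma:Transition} and the construction of $\xi$; since the simultaneous case provides an honest interval-case interval $J_{I}\subset C_{1}$, I extend $t$ by digits $t_{2},t_{3},\dots$ chosen along a surviving branch inside $J_{I}$ so that $t$ has an infinite $n$-ary representation and $C\cap(C+t)\neq\varnothing$. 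Using an infinite representation is deliberate: it is unique, so there is no ambiguity in $\sigma_{t}$, and $\sigma_{t}(1)=i\neq\pm1$ certifies $t\in F^{+}\setminus\{t:\sigma_{t}(k)=\pm1\ \forall k\}$.

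The delicate point is the case of a finite $n$-ary representation in the sparse half, where $t$ has two representations and $\sigma_{t}$ depends on the choice. When $d_{m}<n-1$ the ending-in-zeros representation behaves well, but when $d_{m}=n-1$ an irrecoverable level of that representation can still meet $F^{+}$ through potentially empty intervals, by the $d_{m}=n-1$ clause of Theorem \ref{Sec-3-thm:Finite-Representations}. The fix is to pass to the alternate (ending-in-$(n-1)$) representation: under it the boundary contributions that made intervals potentially empty become genuine interval or potential interval cases, restoring $\sigma_{t}(k)=\pm1$. Making this correspondence precise, via Theorem \ref{Sec-3-thm:Finite-Representations} and a direct comparison of the two representations at the switching level, is the main obstacle; once it is settled the equivalence with sparsity follows from the arguments above.
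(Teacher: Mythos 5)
Your overall architecture matches the paper's proof almost step for step: sparsity kills the simultaneous case, Lemma \ref{Sec-3-Lem:empty-can-be-ignored} rules out $\sigma_{t}(k)=0$ for non-terminating $t$ in $F^{+}$, the converse is settled by a single $t$ with $\sigma_{t}(1)=i$ that still lies in $F^{+}$, and the troublesome case is the finite $n$-ary representation. Two points, one minor and one substantive. The minor one: you justify ``$\sigma_{t}(k)\neq i$'' by citing Proposition \ref{Sec-1-prop:Either-or}, but in the paper that proposition is deduced \emph{from} this theorem, so the citation is circular. The fix is immediate: sparsity gives $\Delta\cap\left(\Delta-1\right)=\varnothing$ and $\left(n-\Delta\right)\cap\left(n-\Delta-1\right)=\varnothing$, so the definition of $\xi$ never outputs $\pm i$ when fed $\pm1$, and induction from $\sigma_{t}(0)=1$ gives $\sigma_{t}(k)\in\{0,\pm1\}$ directly.

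The substantive point is that you explicitly leave the finite-representation case as ``the main obstacle,'' so the proof is not complete as written. You have correctly identified both the problem and the remedy (switch to the representation ending in $n-1$'s), but the verification that $\sigma$ then stays in $\{\pm1\}$ is exactly where the content lies, and it is short enough that you should carry it out. The paper's argument: if $\sigma_{t}(k)=0$ at the first irrecoverable level and $t\in F^{+}$, then some interval of $C_{k}$ is in the potentially empty case and $t_{j}=0$ for $j>k$; writing $t=0._{n}s_{1}s_{2}\cdots$ with $s_{k}=t_{k}-1$ and $s_{j}=n-1$ for $j>k$, the potentially empty intervals of $C_{k}\cap\left(C_{k}+\left\lfloor t\right\rfloor _{k}\right)$ are interval cases of $C_{k}\cap\left(C_{k}+\left\lfloor t\right\rfloor _{k}-\frac{1}{n^{k}}\right)$ while the empty cases cannot become interval cases, so $\sigma_{s}(k)=1$; since no interval of $C_{k}$ was in the interval case and $t\in F^{+}$, Theorem \ref{Sec-3-thm:Finite-Representations} forces $d_{m}=n-1$, hence $n-1\in\Delta$ and (as $n\notin\Delta$) $\xi\left(1,n-1\right)=1$, so $\sigma_{s}(j)=1$ for all $j>k$. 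Without some such computation the sparse direction of the theorem is unproven. On the plus side, you do supply the inclusion $\left\{ t\mid\sigma_{t}(k)=\pm1\right\} \subseteq F^{+}$ via nested compact sets, which the paper leaves implicit, and your choice of an infinite representation in the converse (to make $\sigma_{t}$ unambiguous) is a small but genuine improvement in care over the paper's $t=\delta/n$.
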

\begin{proof}
Suppose $D$ is sparse, then $\Delta\cap\left(\Delta-1\right)=\varnothing$
and $\left(n-\Delta\right)\cap\left(n-\Delta-1\right)=\varnothing$.
Hence, our construction of $\xi$ and $\sigma$ shows that $\sigma_{t}\left(k\right)\in\left\{ 0,\pm1\right\} $
for all $k$ and all $t\in F^{+}.$ We must show that $\sigma_{t}\left(k\right)\neq0$
for all $k$ and all $t\in F^{+}.$

Suppose $t\in F^{+}$ such that $\sigma_{t}\left(k\right)=0$ for
some $k$. By Lemma \ref{Sec-4-Lem:Sigma-property} all $n$-ary intervals
in $C_{k}$ are in the potentially empty or the empty case. Since
$t\in F^{+}$ at least one $n$-ary interval, $J$ say, in $C_{k}$
is in the potentially empty case and $t_{j}=0$ for all $j>k.$ Since
$0\in\Delta$ it follows from the construction of $\sigma$ that $t\neq0.$
Hence, there is a $k\geq1$ such that $t_{k}>0$ and $t_{j}=0$ for
all $j>k.$ Let $s_{j}=t_{j}$ when $j<k,$ $s_{k}=t_{k}-1,$ and
$s_{j}=d_{m-1}$ for all $j>k.$ Then $t=0.s_{1}s_{2}\cdots.$ We
must show that $\sigma_{s}(j)\neq0$ for all $j$. Now $\sigma_{s}(j)=\sigma_{t}(t)\in\left\{ \pm1\right\} $
for all $j<k.$ Hence it remains to consider $j\geq k.$ 

The potentially empty cases in $C_{k}\cap\left(C_{k}+\left\lfloor t\right\rfloor _{k}\right)$
are interval cases in $C_{k}\cap\left(C_{k}+\left\lfloor t\right\rfloor _{k}-\frac{1}{n^{k}}\right).$
Some of the empty cases in $C_{k}\cap\left(C_{k}+\left\lfloor t\right\rfloor _{k}\right)$
may give potentially empty cases in $C_{k}\cap\left(C_{k}+\left\lfloor t\right\rfloor _{k}-\frac{1}{n^{k}}\right),$
but they cannot give interval cases in $C_{k}\cap\left(C_{k}+\left\lfloor t\right\rfloor _{k}-\frac{1}{n^{k}}\right).$
Consequently, $\sigma_{s}(k)=1.$ 

Since $C_{j}\cap\left(C_{j}+\left\lfloor t\right\rfloor _{j}\right)=C_{j}\cap\left(C_{j}+t\right)$
for all $j\geq k$ and $t\in F$ it follows from (\ref{Sec-2-eq:C_t-as-intersection-of- C_ks})
that $C_{j}\cap\left(C_{j}+\left\lfloor t\right\rfloor _{j}\right)$
is non-empty for all $j\geq k.$ 

Since $t=0._{n}t_{1}\cdots t_{k}$ is in $F^{+}$ and no intervals
in $C_{k}$ are in the interval case Theorem \ref{Sec-3-thm:Finite-Representations}
implies $d_{m}=n-1.$ Since $\sigma_{s}(k)=1$ and $s_{j}=d_{m}=n-1\in\Delta,$
it follows from Lemma \ref{Sec-4-Lemma:Transition} that $\sigma_{s}(j)=1$
for all $j>k.$

Conversely, suppose $D$ is not sparse, then $\Delta\cap\left(\Delta-1\right)\neq\varnothing$.
Let $\delta\in\Delta\cap\left(\Delta-1\right)$. Consider $t:=\frac{\delta}{n}$.
Then $\sigma_{t}\left(1\right)=i.$ Hence $C_{1}\cap\left(C_{1}+\left\lfloor t\right\rfloor _{1}\right)=C_{1}\cap\left(C_{1}+t\right)$
contains at least one $n$-ary interval $J$ which is in the interval
case.  The $n$-ary intervals in intervals in $C_{1}\cap\left(C_{1}+t\right)$
refine to $\frac{1}{n}\left(C+h\right)$ for some integer $h$. By
(\ref{Sec-2-eq:C_t-as-intersection-of- C_ks}) $\frac{1}{n}\left(C+h\right)\subseteq C\cap\left(C+t\right).$
In particular, $C\cap\left(C+t\right)\neq\varnothing$ so that $t\in F^{+}$.
\end{proof}
Theorem \ref{Sec-4-Thm:Sparcity_Requirement} shows that the simultaneous
case does not occur when $D$ is sparse. In particular, we have established
Proposition \ref{Sec-1-prop:Either-or}. 

In the following two lemmas we establish two key results required
to establish Theorem \ref{Sec-1-thm:estimate-measure}. In Lemma \ref{Sec-4-Lem:Interval_Count}
we show that $\mu_{t}\left(k\right)$ counts the number of $n$-ary
intervals of $C_{k}$ in either the interval or the potential interval
case. In Lemma \ref{Sec-4-lem:Nothing_Goes_Away_1} we show that the
intervals counted by $\mu_{t}(k)$ have points in common with $C\cap\left(C+t\right),$
hence that we do not ``over'' count. 
\begin{lem}
\label{Sec-4-Lem:Interval_Count}Let $C=C_{n,D}$ be given. Suppose
$t\in F^{+}$ does not admit a finite $n$-ary representation and
$\sigma_{t}(k)=\pm1$ for all $k\geq0$. Then $C_{k}\cap\left(C_{k}+t\right)$
is a union of $\mu_{t}(k)$ intervals, each of length 
\begin{align*}
\ell_{k}:= & \begin{cases}
\frac{1}{n^{k}}-\left(t-\left\lfloor t\right\rfloor _{k}\right) & \text{ when }\sigma_{t}\left(k\right)=1\\
t-\left\lfloor t\right\rfloor _{k} & \text{ when }\sigma_{t}\left(k\right)=-1
\end{cases}.
\end{align*}
\end{lem}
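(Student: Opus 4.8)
The plan is to prove both assertions—the count $\mu_t(k)$ and the common length $\ell_k$—simultaneously by induction on $k$, using the transition analysis of Lemma \ref{Sec-4-Lemma:Transition} as the engine. The base case $k=0$ is immediate: $C_0\cap(C_0+\lfloor t\rfloor_0)=[0,1]$ is a single interval, $\mu_t(0)=1$, $\sigma_t(0)=1$, and $\ell_0=\frac{1}{n^0}-(t-\lfloor t\rfloor_0)=1-t$, which is consistent with Remark \ref{Sec-3-Remark:lengts-of-intersections} in the interval case. For the inductive step I would assume that $C_k\cap(C_k+t)$ consists of exactly $\mu_t(k)$ intervals each of length $\ell_k$, and that $\sigma_t(k)=\pm1$ (guaranteed by the hypothesis), and then track what happens to each such interval under refinement.

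First I would fix a single $n$-ary interval $J\subset C_k$ in the interval or potential interval case, with its matching $K\subset C_k+\lfloor t\rfloor_k$, and compute the contribution of the piece $J\cap(K+(t-\lfloor t\rfloor_k))$ to the next level. The key point is that $\sigma_t(k)=\pm1$ means we are always in a pure interval case or pure potential interval case—never the simultaneous case (this is exactly what Theorem \ref{Sec-4-Thm:Sparcity_Requirement} and Proposition \ref{Sec-1-prop:Either-or} rule out under sparsity, and the hypothesis $\sigma_t(k)=\pm1$ encodes). So every interval counted at level $k$ is of the same type, and Lemma \ref{Sec-4-Lemma:Transition} tells me precisely how many of the $m$ refined subintervals survive: $\#D\cap(D+t_{k+1})$ or the analogous count. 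These counts are exactly the factors appearing in the recursive definition of $\mu_t(k+1)$, once I verify that $\#(D-t_{k+1})\cap(D\cup(D+1))$ equals $\#D\cap(D+t_{k+1})$ plus the potential-interval count, consistently with whether $\sigma_t(k+1)$ lands on $+1$ or $-1$; this bookkeeping identity between the set-intersection cardinalities in the definition of $\mu_t$ and the counts in Lemma \ref{Sec-4-Lemma:Transition} is the routine but essential glue.

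Next I would compute the new length $\ell_{k+1}$. For an interval-case $J$ the surviving overlap has length $\frac{1}{n^k}-(t-\lfloor t\rfloor_k)$; after refining and translating by $\frac{t_{k+1}}{n^{k+1}}$, the surviving subintervals each have length $\frac{1}{n^{k+1}}-(t-\lfloor t\rfloor_{k+1})$ if the new case is again interval, or $t-\lfloor t\rfloor_{k+1}$ if it becomes potential interval—exactly matching the formula for $\ell_{k+1}$ keyed on $\sigma_t(k+1)$. The arithmetic here is just the length computations already carried out in Remark \ref{Sec-3-Remark:lengts-of-intersections}, applied one level down using $\lfloor t\rfloor_{k+1}=\lfloor t\rfloor_k+\frac{t_{k+1}}{n^{k+1}}$ and the non-termination condition $0<t-\lfloor t\rfloor_{k+1}<\frac{1}{n^{k+1}}$. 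A crucial structural observation that makes the length uniform across all surviving intervals is that $\sigma_t(k+1)$ is a single value (either $+1$ or $-1$), so every surviving interval at level $k+1$ is of the same type and hence the same length; the potential mixing that would break uniformity is precisely the simultaneous case, which cannot occur.

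The main obstacle I anticipate is the potential-interval bookkeeping: in the potential interval case the matching interval $K$ sits one $n$-ary slot to the left of $J$, the relevant overlap lives near the right end of $J$, and the refinement/translation geometry is a mirror image of the interval case. I would need to verify carefully that the count $\#(D-n+t_{k+1})\cap(D\cup(D-1))$ in the definition of $\mu_t$ correctly aggregates the interval-producing and potential-interval-producing subintervals of Lemma \ref{Sec-4-Lemma:Transition}, part (2), and that the surviving lengths still come out to the stated $\ell_{k+1}$. The sign conventions $n-\Delta$ versus $n-\Delta-1$ and the shift by $n$ in the digit equation $n+d_p=d_q+t_{k+1}$ are where an off-by-one error is most likely, so that is the step I would check most slowly, using sparsity (via $\sigma_t=\pm1$) to guarantee that the two subcounts inside the single set-intersection cardinality do not double-count any refined subinterval.
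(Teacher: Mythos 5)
Your plan is correct and follows essentially the same route as the paper's proof: induction on $k$ starting from $C_{0}\cap\left(C_{0}+t\right)=[t,1]$, using Lemma \ref{Sec-4-Lemma:Transition} for the transition counts, Remark \ref{Sec-3-Remark:lengts-of-intersections} for the lengths, and the hypothesis $\sigma_{t}(k)=\pm1$ to rule out the simultaneous case so that the single cardinality $\#\left(D-t_{k+1}\right)\cap\left(D\cup\left(D+1\right)\right)$ in the definition of $\mu_{t}$ collapses to exactly the one relevant count from the Transition lemma. The bookkeeping identity you flag as the essential glue is precisely the step the paper carries out (one of the two subcounts is forced to be zero since $\sigma_{t}(k+1)\neq i$), so no gap remains.
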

\begin{proof}
Let $t\in F^{+}$ be given. Suppose $t$ does not admit a finite $n$-ary
representation and $\sigma_{t}(k)=\pm1$ for all $k$. Every $n$-ary
interval in $C_{k}$ is either in the interval, the potential, interval,
or the potentially empty case. By Lemma \ref{Sec-3-Lem:empty-can-be-ignored},
if $J$ is an $n$-ary interval in $C_{k}$ that is in the potentially
empty or the empty case, then $J\cap\left(C_{k}+t\right)$ is empty.
Hence, it is sufficient to consider $n$-ary intervals in $C_{k}$
that either are in the interval or the potential intervals case. By
definition of $\sigma_{t}$, no $n$-ary interval in $C_{k}$ is both
in the interval and the potential interval case. 

Since the length of the intervals is determined by Lemma \ref{Sec-4-Lem:Sigma-property}
and Remark \ref{Sec-3-Remark:lengts-of-intersections}, we only need
to show that $C_{k}\cap\left(C_{k}+t\right)$ contains $\mu_{t}(k)$
intervals for $k\geq0.$ Since $C_{0}\cap\left(C_{0}+t\right)=[t,1]$
is one interval and $\mu_{t}(0)=1$, the claim holds for $k=0.$ 

Assume the claim holds for some integer $k\geq0.$ Then $C_{k}\cap\left(C_{k}+t\right)$
consists of $\mu_{t}(k)$ intervals. Suppose $\sigma_{t}(k)=1.$ Then
$C_{k}$ contains $\mu_{t}(k)$ $n$-ary intervals $J_{j}$ in the
interval case and no intervals in the potential interval case. Since
$t\in F^{+}$ it follows from part (1) of Lemma \ref{Sec-4-Lemma:Transition}
and Lemma \ref{Sec-3-Lem:empty-can-be-ignored} that $t_{k+1}\in\Delta$
or $t_{k+1}\in\Delta-1.$ If $t_{k+1}\in\Delta,$ then each $J_{j}$
gives $\#D\cap\left(D+t_{k+1}\right)$ intervals in $C_{k}\cap\left(C_{k}+t\right)$
by part (1)(a) of Lemma \ref{Sec-4-Lemma:Transition} and Remark \ref{Sec-3-Remark:lengts-of-intersections}.
Hence $C_{k+1}\cap\left(C_{k+1}+t\right)$ contains $\mu_{t}(k)\cdot\#D\cap\left(D+t_{k+1}\right)$
intervals. On the other hand, if $D\cap\left(D+t_{k+1}+1\right)$
is nonempty, then $t_{k+1}$ is an element of $\Delta\cap\left(\Delta-1\right)$
which contradicts the assumption that $\sigma_{t}\left(k+1\right)\neq i$.
Hence $\left(D-t_{k+1}\right)\cap\left(D\cup\left(D+1\right)\right)=D\cap\left(D+t_{k+1}\right).$
Consequently, $\mu_{t}(k+1)=\mu_{t}(k)\cdot\#D\cap\left(D+t_{k+1}\right)$
by the definition of $\mu_{t}.$ The case $t_{k+1}\in\Delta-1$ is
similar to $t_{k+1}\in\Delta.$

The case $\sigma_{t}(k)=-1$ is handled using arguments similar to
those used for $\sigma_{t}(k)=1$ above, replacing $\Delta$ by $n-\Delta$
and $\Delta-1$ by $n-\Delta-1.$ \end{proof}
\begin{lem}
\label{Sec-4-lem:Nothing_Goes_Away_1}Let $C=C_{n,D}$ be given. Suppose
$t\in F^{+}$ does not admit a finite $n$-ary representation and
$\sigma_{t}(k)=\pm1$ for all $k\geq0$. For each $k$, every $n$-ary
interval of $C_{k}$ in the interval or potential interval case contains
points of $C\cap\left(C+t\right)$.\end{lem}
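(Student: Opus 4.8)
The plan is to build, starting from the given interval $J$, a nested sequence of intersection intervals whose common point lies in $C\cap\left(C+t\right)$. Fix $k$ and let $J\subset C_{k}$ be an $n$-ary interval in the interval or potential interval case. Set $I^{(0)}:=J\cap\left(C_{k}+t\right)$. Since $J\subseteq C_{k}$ we have $I^{(0)}=J\cap\left(C_{k}\cap\left(C_{k}+t\right)\right)$, so by Remark \ref{Sec-3-Remark:lengts-of-intersections} this is a genuine nonempty closed interval, namely one of the $\mu_{t}(k)$ pieces produced by Lemma \ref{Sec-4-Lem:Interval_Count}, of length $\ell_{k}>0$.

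The key step is to show that each such intersection interval contains one of the next level. Concretely, I claim the refinement of $J$ contains an $n$-ary interval $J'\subset C_{k+1}$ that is again in the interval or potential interval case, with $J'\subseteq J$ and $I':=J'\cap\left(C_{k+1}+t\right)\subseteq I^{(0)}$. Existence of $J'$ comes from Lemma \ref{Sec-4-Lemma:Transition}: when $J$ is in the interval case, the number of interval-case (resp. potential-interval-case) intervals in its refinement is $\#D\cap\left(D+t_{k+1}\right)$ if $t_{k+1}\in\Delta$ (resp. $\#D\cap\left(D+t_{k+1}+1\right)$ if $t_{k+1}\in\Delta-1$), and this count depends only on $D$ and $t_{k+1}$, not on $J$. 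Since $\sigma_{t}(k+1)=\pm1\neq0$, Lemma \ref{Sec-4-Lem:Sigma-property} rules out the remaining subcase of Lemma \ref{Sec-4-Lemma:Transition} in which all refinements fall into the empty or potentially empty case, so one of these counts is positive and a suitable $J'$ exists; the potential interval case is handled identically with $\Delta$ replaced by $n-\Delta$. The containment $I'\subseteq I^{(0)}$ is then automatic, since $I'\subseteq J'\subseteq J$ and $I'\subseteq C_{k+1}+t\subseteq C_{k}+t$, whence $I'\subseteq J\cap\left(C_{k}+t\right)=I^{(0)}$.

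Iterating this claim produces a nested sequence of closed intervals $I^{(0)}\supseteq I^{(1)}\supseteq I^{(2)}\supseteq\cdots$, where $I^{(j)}=J^{(j)}\cap\left(C_{k+j}+t\right)$ for an $n$-ary interval $J^{(j)}\subset C_{k+j}$ in the interval or potential interval case with $J^{(j+1)}\subseteq J^{(j)}$. By Lemma \ref{Sec-4-Lem:Interval_Count} the length of $I^{(j)}$ is $\ell_{k+j}<\frac{1}{n^{k+j}}\to0$, so by the nested interval theorem $\bigcap_{j\geq0}I^{(j)}$ is a single point $x$. By construction $x\in I^{(j)}\subseteq C_{k+j}\cap\left(C_{k+j}+t\right)$ for every $j\geq0$; since the sets $C_{\ell}\cap\left(C_{\ell}+t\right)$ decrease in $\ell$, this gives $x\in C_{\ell}\cap\left(C_{\ell}+t\right)$ for all $\ell$, hence $x\in C\cap\left(C+t\right)$ by (\ref{Sec-2-eq:C_t-as-intersection-of- C_ks}). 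As $x\in I^{(0)}\subseteq J$, the interval $J$ contains a point of $C\cap\left(C+t\right)$.

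The main obstacle is the key step, and within it the \emph{uniformity} of the refinement count in Lemma \ref{Sec-4-Lemma:Transition}: the conclusion I want is that \emph{every} active interval survives one more level, whereas the hypothesis $\sigma_{t}(k+1)=\pm1$ only asserts that \emph{some} interval of $C_{k+1}$ is active. The bridge is that the count $\#D\cap\left(D+t_{k+1}\right)$ (and its variants) is the same for all interval-case $J$, so a single surviving interval forces all of them to survive with identical multiplicity; I will state this uniformity carefully and check it is unaffected by the choice of $J$. The remaining points, that the refined piece genuinely nests inside its parent and that the lengths tend to zero, follow directly from the formula for $\ell_{k}$ and the non-terminating hypothesis $0<t-\left\lfloor t\right\rfloor_{k}<\frac{1}{n^{k}}$.
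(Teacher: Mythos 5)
Your proof is correct and follows essentially the same route as the paper: an inductive descent through the refinements, using Lemma \ref{Sec-4-Lemma:Transition} together with $\sigma_{t}\neq0$ to guarantee each active interval has an active child, followed by a nested-interval argument and (\ref{Sec-2-eq:C_t-as-intersection-of- C_ks}); the only cosmetic difference is that you track the intersection intervals $J^{(j)}\cap\left(C_{k+j}+t\right)$ where the paper tracks endpoints of the $n$-ary intervals $J_{j}$. Your explicit treatment of the uniformity of the refinement count (why one surviving interval forces every active interval to survive) is a point the paper's proof passes over more quickly, and it is handled correctly.
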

\begin{proof}
Let $J_{0}=\frac{1}{n^{k}}\left(C_{0}+h\right)$ be an $n$-ary interval
of $C_{k}.$ Suppose $J_{0}$ is in the interval case. Let $x_{k}$
be the right hand endpoint of $J_{0}.$ Since $0<t-\left\lfloor t\right\rfloor _{k}<\frac{1}{n^{k}}$
and $J_{0}$ has length $\frac{1}{n^{k}}$ then $x_{k}\in J_{0}\cap\left(J_{0}+t-\left\lfloor t\right\rfloor _{k}\right).$
Now $J_{0}\cap\left(J_{0}+t-\left\lfloor t\right\rfloor _{k}\right)\subseteq C_{k}\cap\left(C_{k}+t\right)$
follows from $J_{0}\subseteq C_{k}+\left\lfloor t\right\rfloor _{k}.$
Consequently, $x_{k}$ is in $C_{k}\cap\left(C_{k}+t\right).$ 

Supposing $J_{0}$ is in the potential interval case and $x_{k}$
be the left hand endpoint of $J_{0},$ an argument similar to the
one above shows that $x_{k}$ is in $C_{k}\cap\left(C_{k}+t\right).$ 

Suppose $J_{0}$ is in the interval case. Then $\sigma_{t}\left(k\right)=1$
by assumption and all $n$-ary intervals in $C_{k}$ are either in
the interval case or one of the empty cases. Since $t\in F^{+}$ it
follows from Lemma \ref{Sec-4-Lemma:Transition} and Lemma \ref{Sec-3-Lem:empty-can-be-ignored}
that at least one subinterval $J_{1}$ in the refinement of $J_{0}$
is either in the interval or the potential interval case. Similarly,
if $J_{0}$ is in the potential interval case it follows that one
of the subintervals $J_{1}$ in the refinement of $J_{0}$ is in the
interval or potential interval case. 

By induction we get a sequence $x_{j}$ of points and a sequence of
intervals $J_{j}$ such that $J_{j+1}\subset J_{j}$ and $x_{j}\in C_{j}\cap\left(C_{j}+t\right)\subseteq J_{j}.$
By the nested interval theorem $x_{j}\to x\in\bigcap J_{j}\subset J_{0}.$
By (\ref{Sec-2-eq:C_t-as-intersection-of- C_ks}) $x\in C\cap\left(C+t\right).$ 
\end{proof}
Theorem \ref{Sec-4-Thm:Sparcity_Requirement} shows that the assumptions
of the previous Lemmas are met whenever $t$ does not admit finite
$n$-ary representation and $D$ is sparse. Example \ref{ex: NGA-Required}
demonstrates we may ``over'' count when $t$ does not meet the $\sigma_{t}\left(k\right)=\pm1$
requirement.

\section{\label{sec:Calculating-Measure}Estimating the Hausdorff Measure
of $C\cap\left(C+t\right)$}

Let $\mathscr{H}^{s}\left(K\right)$ denote the $s$-dimensional Hausdorff
measure of a compact set $K$ and let $\left|K\right|:=\sup\left\{ \left|x-y\right|\mid x,y\in K\right\} $
denote the diameter. Given $\varepsilon>0$, a collection of closed
intervals $\left\{ U_{\alpha}\right\} $ is an\emph{ $\varepsilon$-cover}
of $K$ if $K\subset\bigcup U_{\alpha}$ and $\varepsilon>\left|U_{\alpha}\right|>0$.
Define
\[
\mathscr{H}_{\varepsilon}^{s}\left(K\right):=\inf\left\{ \sum\left|U_{\alpha}\right|^{s}\right\} 
\]
 to be the approximation to the Hausdorff measure of $K$ by $\varepsilon$-covers
so that
\begin{equation}
\mathscr{H}^{s}\left(K\right)=\lim_{\varepsilon\to0}\mathscr{H}_{\varepsilon}^{s}\left(K\right).\label{eq:Hausdorff measure}
\end{equation}

The approximating measure $\mathscr{H}_{\varepsilon}^{s}\left(K\right)$
can be equivalently defined using a collection of arbitrary open or
closed sets, each having appropriate diameter. The closed intervals
definition is natural for this paper based on the construction of
$C\cap\left(C+t\right)$.

The Hausdorff dimension of $C$ is $\log_{n}\left(m\right)$ and $0<\mathscr{H}^{\log_{n}\left(m\right)}\left(C\right)<\infty$
since $C$ is self-similar by \cite{Hut81}. Since $C\cap\left(C+t\right)\subseteq C$,
then $0\le\dim\left(C\cap\left(C+t\right)\right)\le\log_{n}\left(m\right)$
for any real $t$ and if $0<\dim\left(C\cap\left(C+t\right)\right)<\log_{n}\left(m\right)$
then $t$ does not admit finite $n$-ary representation by Theorem
\ref{Sec-3-thm:Finite-Representations}. Our goal is to estimate the
Hausdorff measure of $C\cap\left(C+t\right)$.

\subsection{Infinite $n$-ary representations.}

We use the counting method of Lemma \ref{Sec-4-Lem:Interval_Count}
to estimate the Hausdorff measure of $C\cap\left(C+t\right)$ whenever
$t$ does not admit finite $n$-ary representation.
\begin{thm}
\label{thm:m^-bL<H<L}Let $C=C_{n,D}$ be given. Suppose $t$ is an
element of $F^{+}$ which does not admit finite $n$-ary representation
and $\sigma_{t}\left(k\right)=\pm1$ for all $k$. If $L_{t}:=\liminf_{k\to\infty}\left\{ m^{\nu_{t}\left(k\right)-k\cdot\beta_{t}}\right\} $
and $s:=\beta_{t}\log_{n}\left(m\right)$, then 
\[
m^{-\beta_{t}}\cdot L_{t}\le\mathscr{H}^{s}\left(C\cap\left(C+t\right)\right)\le L_{t}.
\]
\end{thm}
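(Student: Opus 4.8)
The plan is to prove the two inequalities separately: I would obtain the upper bound from the natural cover of $C\cap(C+t)$ by the intervals produced in Lemma \ref{Sec-4-Lem:Interval_Count}, and the lower bound from the mass distribution principle \cite{Fal85} applied to the uniform measure supported on $C\cap(C+t)$. Throughout I will use the identity $n^{-s}=m^{-\beta_t}$, which follows from $s=\beta_t\log_n m$ since then $n^{s}=(n^{\log_n m})^{\beta_t}=m^{\beta_t}$; consequently $n^{-ks}=m^{-k\beta_t}$ for every $k$, and I may assume $0<L_t<\infty$ (the other cases make one of the two bounds trivial).

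For the upper bound, note first that $\ell_k<n^{-k}$ in both cases of Lemma \ref{Sec-4-Lem:Interval_Count}, and that the $\mu_t(k)$ intervals of length $\ell_k$ described there cover $C_k\cap(C_k+t)\supseteq C\cap(C+t)$. Since $\mu_t(k)=m^{\nu_t(k)}$, this cover gives
\[
\mathscr{H}^s_\delta\left(C\cap(C+t)\right)\le \mu_t(k)\,\ell_k^{\,s}\le m^{\nu_t(k)}n^{-ks}=m^{\nu_t(k)-k\beta_t}
\]
for every $\delta>\ell_k$. Because $\ell_k\to0$, fixing $\delta$ and letting $k$ range over the indices with $\ell_k<\delta$ gives $\mathscr{H}^s_\delta\le\inf\{m^{\nu_t(k)-k\beta_t}:\ell_k<\delta\}$; sending $\delta\to0$ forces those indices to infinity, so $\mathscr{H}^s(C\cap(C+t))\le\liminf_{k}m^{\nu_t(k)-k\beta_t}=L_t$.

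For the lower bound I would first record that the branching is uniform: by Lemma \ref{Sec-4-Lemma:Transition} each of the $\mu_t(k)$ intervals at level $k$ refines to the same number $\mu_t(k+1)/\mu_t(k)$ of intervals at level $k+1$, so there is a well-defined Borel probability measure $\nu$ on $C\cap(C+t)$ assigning mass $1/\mu_t(k)$ to the trace on $C\cap(C+t)$ of each level-$k$ interval, and Lemma \ref{Sec-4-lem:Nothing_Goes_Away_1} guarantees each such interval genuinely carries points of the set. I then estimate $\nu(U)$ for a set $U$ of small diameter. Let $k$ be the largest integer for which $U$ meets at most one level-$k$ interval (if $U$ meets at most one interval at every level, its trace on the set is a single point and $\nu(U)=0$). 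Then $U$ meets at least two level-$(k+1)$ intervals, necessarily children of the single level-$k$ interval it meets, so $\nu(U)\le 1/\mu_t(k)=m^{-\nu_t(k)}$. Distinct children arise from distinct digits of $D$, which differ by at least $2$ because $D$ is sparse; hence these two intervals lie in $n$-ary intervals of length $n^{-(k+1)}$ whose left endpoints differ by at least $2n^{-(k+1)}$, so that $U$, meeting both, satisfies $|U|\ge n^{-(k+1)}$ and thus $|U|^s\ge n^{-(k+1)s}=m^{-(k+1)\beta_t}$. Combining the two estimates,
\[
\frac{\nu(U)}{|U|^s}\le \frac{m^{-\nu_t(k)}}{m^{-(k+1)\beta_t}}=\frac{m^{\beta_t}}{m^{\nu_t(k)-k\beta_t}}.
\]
Given $\varepsilon>0$, I would choose $K$ so large that $m^{\nu_t(k)-k\beta_t}\ge L_t-\varepsilon$ for all $k\ge K$; since $|U|\ge n^{-(k+1)}$ forces the associated level $k$ to infinity as $|U|\to0$, every $U$ with diameter below a suitable threshold $\delta_0$ has $k\ge K$, whence $\nu(U)\le \frac{m^{\beta_t}}{L_t-\varepsilon}|U|^s$. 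The mass distribution principle \cite{Fal85} then yields $\mathscr{H}^s(C\cap(C+t))\ge (L_t-\varepsilon)m^{-\beta_t}$, and $\varepsilon\to0$ finishes the lower bound.

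I expect the lower bound, and within it the geometric separation step, to be the main obstacle. The factor $m^{-\beta_t}$ is precisely the cost of ``losing one level'': I bound $\nu(U)$ by the mass $m^{-\nu_t(k)}$ of a single level-$k$ interval while only controlling $|U|$ from below by the level-$(k+1)$ separation $n^{-(k+1)}$, and obtaining that clean separation rests essentially on sparsity, through the fact that distinct digits of $D$ differ by at least $2$. Some care is also needed to convert the $\liminf$ defining $L_t$ into a bound valid uniformly over all sufficiently small $U$, which is why I introduce $\varepsilon$ and the threshold $\delta_0$ (equivalently the level $K$) before invoking the mass distribution principle.
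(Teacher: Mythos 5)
Your overall strategy is sound, and your upper bound is essentially the paper's: you cover $C\cap\left(C+t\right)$ by the $\mu_{t}(k)=m^{\nu_{t}(k)}$ components of $C_{k}\cap\left(C_{k}+t\right)$ furnished by Lemma \ref{Sec-4-Lem:Interval_Count} and then bound $\ell_{k}\le n^{-k}$ (the paper uses the ambient $n$-ary intervals of length $n^{-k}$ directly, and notes in Remark \ref{Sec-5:Remark-smaller-upper-bound} that retaining $\ell_{k}$ yields the sharper bound $\widetilde{L}_{t}$). For the lower bound you diverge: the paper runs a direct covering count --- given a finite $\varepsilon$-cover $\left\{ U_{i}\right\}$ it assigns to each $U_{i}$ the level $h_{i}$ with $n^{-\left(h_{i}+1\right)}\le\left|U_{i}\right|<n^{-h_{i}}$, shows $U_{i}$ meets at most one relevant $n$-ary interval at level $h_{i}$, counts descendants at a common deep level $k$ to get $1\le\sum_{i}m^{-\nu_{t}\left(h_{i}\right)}$, and then sums $\left|U_{i}\right|^{s}$ --- whereas you package the same counting into the uniformly branching measure and invoke the mass distribution principle from \cite{Fal85}. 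These are dual formulations of the same estimate; both hinge on the same separation fact and both pay the factor $m^{-\beta_{t}}$ for ``losing one level'' (mass controlled at level $k$, diameter controlled only at level $k+1$). Your route is standard and arguably cleaner, and it uses Lemma \ref{Sec-4-lem:Nothing_Goes_Away_1} only to know the measure is genuinely carried by $C\cap\left(C+t\right)$.

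Two points need repair. First, your separation step is justified by sparsity of $D$ (``distinct digits differ by at least $2$''), but the theorem does not assume $D$ sparse --- only that $\sigma_{t}(k)=\pm1$ for all $k$. If $D$ contains consecutive digits your stated reason fails. The correct argument, and the one the paper uses, is that if two children occupied adjacent $n$-ary intervals $J$ and $J+n^{-\left(k+1\right)}$ and both were in the interval (resp.\ potential interval) case, then one of them would simultaneously be in the interval and the potential interval case, forcing $\sigma_{t}\left(k+1\right)=i$ by Lemma \ref{Sec-4-Lem:Sigma-property} and contradicting the hypothesis; so the relevant intervals at level $k+1$ are never adjacent and your bound $\left|U\right|\ge n^{-\left(k+1\right)}$ survives in the stated generality. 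Second, reducing to $0<L_{t}<\infty$ does not dispose of $L_{t}=\infty$: there the upper bound is trivial but the lower bound asserts $\mathscr{H}^{s}\left(C\cap\left(C+t\right)\right)=\infty$, which still requires proof. Your argument does deliver it --- replace $L_{t}-\varepsilon$ by an arbitrary $M$ with $m^{\nu_{t}(k)-k\beta_{t}}\ge M$ for all $k\ge K(M)$ and let $M\to\infty$ --- but as written you have silently skipped this case, which the paper treats explicitly.
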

\begin{proof}
We begin by showing $\mathscr{H}^{s}\left(C\cap\left(C+t\right)\right)\le L_{t}$.
Let $N\in\mathbb{N}_{0}$ be given and $k\ge N$ be arbitrary so that
$n^{-N}\ge n^{-k}$. 

Lemma \ref{Sec-4-Lem:Interval_Count} shows that $C_{k}\cap\left(C_{k}+\left\lfloor t\right\rfloor _{k}\right)$
consists of $m{}^{\nu_{t}\left(k\right)}$ closed $n$-ary intervals
which cover $C\cap\left(C+t\right)$. Let $V_{i}$ denote the $i^{\text{th}}$
such interval of length $\frac{1}{n^{k}}$ so that $\left\{ V_{i}\right\} _{i=1}^{m^{\nu_{t}\left(k\right)}}$
is the collection of intervals or potential intervals chosen from
$C_{k}\cap\left(C_{k}+\left\lfloor t\right\rfloor _{k}\right)$. Then
\begin{align}
\mathscr{H}_{n^{-N}}^{s}\left(C\cap\left(C+t\right)\right) & \le\sum_{i=1}^{m^{\nu_{t}\left(k\right)}}\left|V_{i}\right|^{s}=m^{\nu_{t}\left(k\right)}\cdot\left(\frac{1}{n^{k}}\right)^{\beta_{t}\log_{n}\left(m\right)}=m^{\nu_{t}\left(k\right)-k\cdot\beta_{t}}.\label{eq:L_t-upper-bound}
\end{align}

Since $k\ge N$ is arbitrary, then $\mathscr{H}_{n^{-N}}^{s}\left(C\cap\left(C+t\right)\right)\le\liminf_{k\to\infty}\left\{ m^{\nu_{t}\left(k\right)-k\cdot\beta_{t}}\right\} $
and $\mathscr{H}^{s}\left(C\cap\left(C+t\right)\right)\le L_{t}$
by equation (\ref{eq:Hausdorff measure}). Thus, if $L_{t}=0$ then
$\mathscr{H}^{s}\left(C\cap\left(C+t\right)\right)=0$ and we are
finished. 

Suppose $0<L_{t}<\infty$. Then for arbitrarily small $\delta>0$,
there exists $N\left(\delta\right)\in\mathbb{N}$ such that $L_{t}-\delta\le m^{\nu_{t}\left(k\right)-k\cdot\beta_{t}}$
for all $k\ge N\left(\delta\right)$. Let $\varepsilon=n^{-N\left(\delta\right)}$. 

Let $\left\{ U_{\alpha}\right\} $ be an arbitrary closed $\varepsilon$-cover
of $C\cap\left(C+t\right)$. By compactness of $C\cap\left(C+t\right)$,
there exists a finite subcover $\left\{ U_{i}\right\} _{i=1}^{r}$
for some integer $r$. For each $1\le i\le r$, let $h_{i}$ denote
the integer satisfying 
\[
\left(\frac{1}{n}\right)^{h_{i}+1}\le\left|U_{i}\right|<\left(\frac{1}{n}\right)^{h_{i}}.
\]

Let $k\ge\max\left\{ h_{i}+1\mid1\le i\le r\right\} $ be arbitrary.
For each $1\le i\le r$, define $\mathcal{U}_{i}$ to be the collection
of $n$-ary intervals $J\subset C_{k}\cap\left(C_{k}+\left\lfloor t\right\rfloor _{k}\right)$
such that $J$ is in either the potential interval or the interval
case and $J\cap U_{i}\neq\varnothing$. Since $\sigma_{t}\left(k\right)=\pm1$
by assumption, then each $J\in\mathcal{U}_{i}$ contains points of
$C\cap\left(C+t\right)$ by Lemma \ref{Sec-4-lem:Nothing_Goes_Away_1}.
Thus, $\bigcup_{i=1}^{r}\mathcal{U}_{i}=C_{k}\cap\left(C_{k}+\left\lfloor t\right\rfloor _{k}\right)$. 

For any $j$, the set $C_{j}\cap\left(C_{j}+\left\lfloor t\right\rfloor _{j}\right)$
contains $m^{\nu_{t}\left(j\right)}$ intervals which all transition
the same way, then each interval $K\subset C_{h_{i}}\cap\left(C_{h_{i}}+\left\lfloor t\right\rfloor _{h_{i}}\right)$
transitions to $m^{\nu_{t}\left(k\right)-\nu_{t}\left(h_{i}\right)}$
intervals or potential intervals of $C_{k}\cap\left(C_{k}+\left\lfloor t\right\rfloor _{k}\right)$. 

If there exists an $n$-ary interval $J$ such that both $J$ and
$J-\frac{1}{n^{h_{i}}}$ are intervals in $C_{h_{i}}\cap\left(C_{h_{i}}+\left\lfloor t\right\rfloor _{h_{i}}\right)$
then $J$ is in both the interval and potential interval case. However,
$\sigma_{t}\left(h_{i}\right)=\sqrt{-1}$ by Lemma \ref{Sec-4-Lem:Sigma-property},
which contradicts our assumption. Thus, any pair of $n$-ary intervals
of $C_{h_{i}}\cap\left(C_{h_{i}}+\left\lfloor t\right\rfloor _{h_{i}}\right)$
are separated by at least $\frac{1}{n^{h_{i}}}$. Due to the diameter
$\frac{1}{n^{h_{i}}}>\left|U_{i}\right|$, each $U_{i}$ intersects
at most one interval of $C_{h_{i}}\cap\left(C_{h_{i}}+\left\lfloor t\right\rfloor _{h_{i}}\right)$.
Thus, 
\[
m^{\nu_{t}\left(k\right)}=\#\left(\bigcup_{i=1}^{r}\mathcal{U}_{i}\right)\le\sum_{i=1}^{r}\#\mathcal{U}_{i}\le\sum_{i=1}^{r}m^{\nu_{t}\left(k\right)-\nu_{t}\left(h_{i}\right)}.
\]

Hence, $1\le\sum_{i=1}^{r}m^{-\nu_{t}\left(h_{i}\right)}$. Furthermore,
$\left(L_{t}-\delta\right)m^{-\nu_{t}\left(h_{i}\right)}\le m^{-h_{i}\cdot\beta_{t}}$
since $h_{i}\ge N\left(\delta\right)$ by choice of $\varepsilon$.
\begin{align}
\sum_{i=1}^{r}\left|U_{i}\right|^{s} & \ge\sum_{i=1}^{r}\left(\frac{1}{n}\right)^{\left(h_{i}+1\right)\beta_{t}\log_{n}\left(m\right)}\nonumber \\
 & \ge m^{-\beta_{t}}\cdot\sum_{i=1}^{r}m^{-\beta_{t}\cdot h_{i}}\nonumber \\
 & \ge m^{-\beta_{t}}\left(L_{t}-\delta\right)\sum_{i=1}^{r}m^{-\nu_{t}\left(h_{i}\right)}\nonumber \\
 & \ge m^{-\beta_{t}}\left(L_{t}-\delta\right).\label{eq:m^(-b)*L}
\end{align}

Since $\left\{ U_{\alpha}\right\} $ is an arbitrary $\varepsilon$-cover
of $C\cap\left(C+t\right)$ then $\mathscr{H}_{\varepsilon}^{s}\left(C\cap\left(C+t\right)\right)\ge m^{-\beta_{t}}\left(L_{t}-\delta\right)$.
Furthermore, $\varepsilon=n^{-N\left(\delta\right)}\to0$ as $\delta\to0$
so that 
\[
m^{-\beta_{t}}L_{t}=\lim_{\delta\to0}\left(m^{-\beta_{t}}\left(L_{t}-\delta\right)\right)\le\lim_{\varepsilon\to0}\mathscr{H}_{\varepsilon}^{s}\left(C\cap\left(C+t\right)\right)=\mathscr{H}^{s}\left(C\cap\left(C+t\right)\right).
\]

Suppose $L_{t}=\infty$. Then for each $j\in\mathbb{N}$ there exists
$N\left(j\right)\in\mathbb{N}$ such that $j\le m^{\nu_{t}\left(k\right)-k\cdot\beta_{t}}$
for all $k\ge N\left(j\right)$. Choose $\varepsilon$ such that $n^{-N\left(\left\lceil m^{\beta_{t}}\cdot j\right\rceil \right)}>\varepsilon>0$.
Thus we can replace $\left(L_{t}-\delta\right)$ by $\left\lceil m^{\beta_{t}}\cdot j\right\rceil $
in equation (\ref{eq:m^(-b)*L}) so that
\begin{align*}
\sum_{i=1}^{r}\left|U_{i}\right|^{s} & \ge m^{-\beta_{t}}\cdot\left\lceil m^{\beta_{t}}\cdot j\right\rceil \sum_{i=1}^{r}m^{-\nu_{t}\left(h_{i}\right)}\ge j.
\end{align*}

Hence, $\mathscr{H}^{s}\left(C\cap\left(C+t\right)\right)\ge\lim_{j\to\infty}\left(j\right)=\infty$.
\end{proof}
Theorem \ref{thm:m^-bL<H<L} shows that $C\cap\left(C+t\right)$ is
an $s$-set \cite{Fal85} whenever $0<L_{t}<\infty$ and $C\cap\left(C+t\right)$
is not self-similar for any $t$ such that $L_{t}$ is either zero
or infinite. Furthermore, if $C=C_{n,D}$ is sparse and $t\in F^{+}$
does not admit finite $n$-ary representation, then $m^{-\beta_{t}}\cdot L_{t}\le\mathscr{H}^{s}\left(C\cap\left(C+t\right)\right)\le L_{t}$
by Theorem \ref{Sec-4-Thm:Sparcity_Requirement}.
\begin{rem}
\label{Sec-5:Remark-smaller-upper-bound}The proof of Theorem \ref{thm:m^-bL<H<L}
calculates the upper bound $L_{t}$ using the collection of $n$-ary
intervals chosen from $C_{k}\cap\left(C_{k}+\left\lfloor t\right\rfloor _{k}\right)$.
When $D$ is sparse, then $C_{k}\cap\left(C_{k}+t\right)$ consists
of $m^{\nu_{t}\left(k\right)}$ intervals of length $\ell_{k}\le\frac{1}{n^{k}}$
which also cover $C\cap\left(C+t\right)$ by Lemma \ref{Sec-4-Lem:Interval_Count}.
Choosing this cover, we can replace $\frac{1}{n^{k}}$ by $\ell_{k}$
in equation (\ref{eq:L_t-upper-bound}) and define $\widetilde{L}_{t}:=\liminf_{k\to\infty}\left\{ m^{\nu_{t}\left(k\right)}\left(\ell_{k}\right)^{\beta_{t}\log_{n}\left(m\right)}\right\} $
so that 
\[
\mathscr{H}^{s}\left(C\cap\left(C+t\right)\right)\le\widetilde{L}_{t}\le L_{t}.
\]

This may calculate a more accurate upper bound for the Hausdorff measure
of $C\cap\left(C+t\right)$, however it is more difficult to calculate
$\widetilde{L}_{t}$ since $\ell_{k}$ depends directly on $t$. Example
\ref{ex:< M_t =00003D L_t} shows that the Hausdorff measure may be
strictly smaller than $\widetilde{L}_{t}$.\end{rem}
\begin{cor}
\label{cor:H=00003Dbeta_t}Let $C=C_{n,D}$ be given. If $t\in F^{+}$
does not admit finite $n$-ary representation and $\sigma_{t}\left(k\right)=\pm1$
for all $k$, then the Hausdorff dimension of $C\cap\left(C+t\right)$
is $\beta_{t}\log_{n}\left(m\right)$.\end{cor}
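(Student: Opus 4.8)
The plan is to read off the dimension from the measure estimates of Theorem \ref{thm:m^-bL<H<L} evaluated along a one-parameter family of exponents. Recall that the Hausdorff dimension of a set $K$ equals $\inf\{s'\ge 0:\mathscr{H}^{s'}(K)=0\}=\sup\{s'\ge 0:\mathscr{H}^{s'}(K)=\infty\}$. The hypotheses here are exactly those of Theorem \ref{thm:m^-bL<H<L}, so that theorem applies and gives $m^{-\beta_{t}}L_{t}\le\mathscr{H}^{s}(C\cap(C+t))\le L_{t}$ at $s=\beta_{t}\log_{n}(m)$. If $0<L_{t}<\infty$ these inequalities already force $0<\mathscr{H}^{s}(C\cap(C+t))<\infty$, so the dimension is exactly $s$ and there is nothing more to do. The remaining work is to cover the degenerate cases $L_{t}=0$ and $L_{t}=\infty$, and for this I would re-run the two halves of the proof of Theorem \ref{thm:m^-bL<H<L} at exponents $s'=\beta'\log_{n}(m)$ with $\beta'\neq\beta_{t}$.

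For the upper bound on the dimension, fix $\beta'>\beta_{t}$ and put $s'=\beta'\log_{n}(m)$. Using the cover of $C\cap(C+t)$ by the $m^{\nu_{t}(k)}$ intervals of length $n^{-k}$ supplied by Lemma \ref{Sec-4-Lem:Interval_Count}, the computation behind (\ref{eq:L_t-upper-bound}) gives $\mathscr{H}_{n^{-N}}^{s'}(C\cap(C+t))\le\liminf_{k}m^{\nu_{t}(k)-k\beta'}$ for every $N$. Since $\beta_{t}=\liminf_{k}\nu_{t}(k)/k$, along a subsequence realizing this liminf one has $\nu_{t}(k)-k\beta'=k(\nu_{t}(k)/k-\beta')\to-\infty$, so the right-hand side is $0$ and hence $\mathscr{H}^{s'}(C\cap(C+t))=0$. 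Letting $\beta'\downarrow\beta_{t}$ yields $\dim(C\cap(C+t))\le\beta_{t}\log_{n}(m)$.

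For the lower bound, fix $\beta'<\beta_{t}$ and again put $s'=\beta'\log_{n}(m)$. I would repeat the lower-bound argument of Theorem \ref{thm:m^-bL<H<L} verbatim with $\beta'$ in place of $\beta_{t}$, so that the role of $L_{t}$ is now played by $\widetilde{L}:=\liminf_{k}m^{\nu_{t}(k)-k\beta'}$. The separation and counting steps there, namely that distinct $n$-ary intervals of $C_{h_{i}}\cap(C_{h_{i}}+\lfloor t\rfloor_{h_{i}})$ are $n^{-h_{i}}$-separated, that each $U_{i}$ meets at most one of them, and that $1\le\sum_{i}m^{-\nu_{t}(h_{i})}$, together with Lemma \ref{Sec-4-lem:Nothing_Goes_Away_1}, do not involve the exponent and carry over unchanged. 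Since $\beta'<\beta_{t}$, for every $\varepsilon\in(0,\beta_{t}-\beta')$ one has $\nu_{t}(k)/k>\beta_{t}-\varepsilon$ for all large $k$, whence $\nu_{t}(k)-k\beta'\ge k(\beta_{t}-\varepsilon-\beta')\to\infty$ and $\widetilde{L}=\infty$. The $L_{t}=\infty$ branch of that proof then produces $\sum_{i}|U_{i}|^{s'}\ge j$ for every $j$, so $\mathscr{H}^{s'}(C\cap(C+t))=\infty$ and $\dim(C\cap(C+t))\ge\beta'\log_{n}(m)$; letting $\beta'\uparrow\beta_{t}$ gives $\dim(C\cap(C+t))\ge\beta_{t}\log_{n}(m)$.

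Combining the two bounds gives $\dim(C\cap(C+t))=\beta_{t}\log_{n}(m)$. I expect the only point requiring genuine care, rather than a true obstacle, to be the two $\liminf$ computations: that $\beta'>\beta_{t}$ forces $\liminf_{k}m^{\nu_{t}(k)-k\beta'}=0$ while $\beta'<\beta_{t}$ forces it to be $+\infty$. Both follow directly from $\beta_{t}=\liminf_{k}\nu_{t}(k)/k$, the first by passing to a subsequence realizing the liminf and the second from the eventual lower bound $\nu_{t}(k)/k>\beta_{t}-\varepsilon$; everything else is a transcription of the estimates already established in Theorem \ref{thm:m^-bL<H<L}.
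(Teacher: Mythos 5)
Your proof is correct and takes essentially the same approach as the paper: both arguments re-run the upper- and lower-bound halves of the proof of Theorem \ref{thm:m^-bL<H<L} with $\beta_{t}$ replaced by a nearby exponent ($\gamma>\beta_{t}$ for the covering estimate, $\gamma<\beta_{t}$ for the separation estimate) and evaluate the resulting $\liminf$ using $\beta_{t}=\liminf_{k}\nu_{t}(k)/k$. The only cosmetic difference is that the paper deploys the shifted upper bound only in the case $L_{t}=\infty$ and the shifted lower bound only in the case $L_{t}=0$, whereas you run both directions uniformly.
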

\begin{proof}
The dimension is determined by Theorem \ref{thm:m^-bL<H<L} whenever
$0<L_{t}<\infty$. We need to show the result when $L_{t}$ is zero
or infinite. Let $\varepsilon>0$ be given and $\left\{ U_{i}\right\} _{i=1}^{r}$
an arbitrary $\varepsilon$-cover of $C\cap\left(C+t\right)$ as in
the proof of Theorem \ref{thm:m^-bL<H<L}. Let $N\left(\varepsilon\right)\in\mathbb{N}$
be such that $\varepsilon>n^{-N\left(\varepsilon\right)}$.

Suppose $L_{t}=\infty$. Choose an arbitrary value $\gamma$ such
that $\beta_{t}<\gamma$ and choose $\delta$ such that $\gamma-\beta_{t}>\delta>0$.
By definition of $\beta_{t}$ there exists a subsequence $\left\{ h_{j}\right\} $
and integer $M\left(\delta\right)$ such that $\frac{\nu_{t}\left(h_{j}\right)}{h_{j}}<\beta_{t}+\delta<\gamma$
for all $j\ge M\left(\delta\right)$. Then for any $j\ge\max\left\{ N\left(\varepsilon\right),M\left(\delta\right)\right\} $
we can replace $\beta_{t}$ by $\gamma$ in the proof of Theorem \ref{thm:m^-bL<H<L}
so that
\begin{align*}
\mathscr{H}_{\varepsilon}^{\gamma\log_{n}\left(m\right)}\left(C\cap\left(C+t\right)\right) & \le\liminf_{j\to\infty}\left\{ m^{\nu_{t}\left(h_{j}\right)-h_{j}\cdot\gamma}\right\} =\liminf_{k\to\infty}\left\{ m^{\left(\frac{\nu_{t}\left(h_{j}\right)}{h_{j}}-\gamma\right)h_{j}}\right\} \\
 & \le\liminf_{k\to\infty}\left\{ m^{\left(\beta_{t}+\delta-\gamma\right)h_{j}}\right\} =0.
\end{align*}

Since $\varepsilon>0$ is arbitrary, then $\mathscr{H}^{\gamma\log_{n}\left(m\right)}\left(C\cap\left(C+t\right)\right)=0$
for any $\gamma>\beta_{t}$. 

Suppose $L_{t}=0$. Choose an arbitrary value $\gamma$ such that
$0\le\gamma<\beta_{t}$. Let $\Gamma_{t}:=\liminf_{k\to\infty}\left\{ m^{\nu_{t}\left(k\right)-k\cdot\gamma}\right\} $.
Choose $\delta$ such that $\beta_{t}-\gamma>\delta>0$ and choose
$M\left(\delta\right)$ such that $\Gamma_{t}-\delta\le m^{\nu_{t}\left(k\right)-k\cdot\gamma}$
for all $k\ge M\left(\delta\right)$. Thus, we can replace $\beta_{t}$
by $\gamma$ and $L_{t}$ by $\Gamma_{t}$ in the proof of Theorem
\ref{thm:m^-bL<H<L} so that $\mathscr{H}^{\gamma\log_{n}\left(m\right)}\left(C\cap\left(C+t\right)\right)$
is infinite whenever $\Gamma_{t}=\infty$.

Since $m^{\left(\beta_{t}-\delta-\gamma\right)}>1$, then any $k\ge\max\left\{ N\left(\varepsilon\right),M\left(\delta\right)\right\} $,

\begin{align*}
m^{\nu_{t}\left(k\right)-k\cdot\gamma} & =m^{\left(\frac{\nu_{t}\left(k\right)}{k}-\gamma\right)k}\\
 & \ge m^{\left(\beta_{t}-\delta-\gamma\right)k}\\
 & \ge m^{\left(\beta_{t}-\delta-\gamma\right)N\left(\varepsilon\right)}.
\end{align*}
Hence, $\Gamma_{t}\ge\liminf_{N\left(\varepsilon\right)\to\infty}\left\{ m^{\left(\beta_{t}-\delta-\gamma\right)N\left(\varepsilon\right)}\right\} =\infty$
so that $\mathscr{H}^{\gamma\log_{n}\left(m\right)}\left(C\cap\left(C+t\right)\right)=\infty$
for any $0\le\gamma<\beta_{t}$.\end{proof}
\begin{cor}
\label{cor:F_b,A dense in F}Let $C=C_{n,D}$ be sparse and $\beta,y\in\mathbb{R}$
such that $0<\beta<1$ and $0<y<\infty$. Define
\[
F_{\beta,y}:=\left\{ x\mid m^{-2\beta}\cdot y\le\mathscr{H}^{\beta\cdot\log_{n}\left(m\right)}\left(C\cap\left(C+x\right)\right)\le y\right\} .
\]

Then $F_{\beta,y}$ is dense in $F$.\end{cor}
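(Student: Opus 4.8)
The plan is to prove density on $F^{+}$ and transfer to $F$ by symmetry. Since $F=-F$ and $\mathscr{H}^{s}\left(C\cap\left(C+t\right)\right)=\mathscr{H}^{s}\left(C\cap\left(C-t\right)\right)$ (the two sets are translates), the set $F_{\beta,y}$ is invariant under $x\mapsto-x$, and $F=F^{+}\cup\left(-F^{+}\right)$, so it suffices to approximate an arbitrary $x_{0}\in F^{+}$ by points of $F_{\beta,y}$. Given $x_{0}\in F^{+}$ and $\varepsilon>0$, I aim to construct $t\in F^{+}$ with $\left|t-x_{0}\right|<\varepsilon$, $\beta_{t}=\beta$, and $L_{t}\in\left[m^{-\beta}y,y\right]$. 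Once such a $t$ is built, Theorem \ref{thm:m^-bL<H<L} applies (the hypotheses $\sigma_{t}\left(k\right)=\pm1$ hold automatically since $D$ is sparse, by Theorem \ref{Sec-4-Thm:Sparcity_Requirement}) and yields, with $s=\beta\log_{n}\left(m\right)$, the containment $\mathscr{H}^{s}\left(C\cap\left(C+t\right)\right)\in\left[m^{-\beta}L_{t},L_{t}\right]\subseteq\left[m^{-2\beta}y,y\right]$, i.e. $t\in F_{\beta,y}$; letting $\varepsilon\to0$ then finishes the proof.

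First I would isolate two admissible transitions. Writing $\mu_{t}\left(k\right)$ as a product of the per-step factors of Lemma \ref{Sec-4-Lemma:Transition}, the factor at step $k+1$ is governed by the digit $t_{k+1}$. From the interval case ($\sigma_{t}\left(k\right)=1$) the digit $0\in\Delta$ gives factor $\#D\cap D=m$ (sparsity forces $D\cap\left(D+1\right)=\varnothing$) and keeps us in the interval case, while the digit $d_{m}\in\Delta$ gives factor $\#D\cap\left(D+d_{m}\right)=1$ (only $d_{m}$ survives) and also stays in the interval case; sparsity guarantees neither digit lies in $\Delta-1$, so no simultaneous case arises. From the potential interval case ($\sigma_{t}\left(k\right)=-1$) the digits $n-1$ and $n-1-d_{m}$ play exactly these roles via part (2) of the lemma, again staying in the potential interval case. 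Thus from whichever state $x_{0}$ puts us in, I have an ``up'' move (factor $m$) and a ``down'' move (factor $1$) that preserve a $\pm1$ state, so the increments of $b_{k}:=\log_{m}\left(\mu_{t}(k)/\mu_{t}(k-1)\right)$ can be chosen freely to be $1$ or $0$.

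Next, fix $N$ with $n^{-N}<\varepsilon$ and set $t_{k}$ equal to the $k$-th $n$-ary digit of $x_{0}$ for $k\le N$; this forces $\left|t-x_{0}\right|\le n^{-N}<\varepsilon$ and leaves $\sigma_{t}\left(N\right)=\sigma_{x_{0}}\left(N\right)\in\left\{ \pm1\right\}$. For $k>N$ I would steer $f\left(k\right):=\nu_{t}\left(k\right)-k\beta$, whose increment is $1-\beta>0$ on an up move and $-\beta<0$ on a down move, by the greedy rule: take an up move when $f\left(k\right)\le\lambda$ and a down move when $f\left(k\right)>\lambda$, where $\lambda:=\log_{m}\left(y\right)$. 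A short invariance check (using $0<\beta<1$, so both step sizes are positive and smaller than the band height $1$) shows the band $\left(\lambda-\beta,\lambda+1-\beta\right]$ is entered after finitely many steps and never left. Consequently $f$ is bounded, hence $\nu_{t}\left(k\right)/k\to\beta$ and $\beta_{t}=\beta$.

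Finally I read off $L_{t}$. Since $f\left(k\right)>\lambda-\beta$ for all large $k$, we get $\liminf_{k}f\left(k\right)\ge\lambda-\beta$; and since any up-run must eventually push $f$ above $\lambda$, after which a down-run of steps $-\beta$ returns it to $\le\lambda$, the value $f\left(k\right)\le\lambda$ occurs infinitely often, giving $\liminf_{k}f\left(k\right)\le\lambda$. Therefore $\log_{m}L_{t}=\liminf_{k}f\left(k\right)\in\left[\lambda-\beta,\lambda\right]$, i.e. $L_{t}\in\left[m^{-\beta}y,y\right]$, exactly the range needed above. The oscillation uses the two digits ($0$ and $d_{m}\neq0$, or $n-1$ and $n-1-d_{m}$) infinitely often, so $t$ is not eventually constant and admits no finite $n$-ary representation, while all factors are $\ge1$, keeping each $C_{k}\cap\left(C_{k}+\left\lfloor t\right\rfloor _{k}\right)$ nonempty so that $t\in F^{+}$ and Lemma \ref{Sec-4-lem:Nothing_Goes_Away_1} is in force. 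The main obstacle is precisely this walk: the target band for $\liminf f$ has width $\beta$, which coincides with the size of a down-step, and it is this matching (available exactly because $0<\beta<1$) that lets me pin $\beta_{t}=\beta$ and $L_{t}\in\left[m^{-\beta}y,y\right]$ simultaneously; the boundary cases $x_{0}\in\left\{ 0,1\right\} \cap F$ need only the remark that the constructed $t$ has infinitely many nonzero digits, so $0<t<1$.
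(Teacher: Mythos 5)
Your proposal is correct and follows essentially the same route as the paper: match the first $N$ digits of the target point, then greedily alternate between a digit giving per-step factor $m$ and one giving factor $1$ so that $\liminf_k m^{\nu_t(k)-k\beta}$ lands in $\left[m^{-\beta}y,y\right]$, and conclude via Theorem \ref{thm:m^-bL<H<L}. The only cosmetic difference is that the paper first forces a transition into the interval case (via $x_k=0$ or $x_k=n-d_m$) before steering, whereas you steer within whichever of the two states you inherit; your explicit band-invariance check and verification that $\beta_t=\beta$ are details the paper leaves implicit.
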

\begin{proof}
Choose $0<\beta<1$ and $0<y<\infty$. It is sufficient to show that
$F_{\beta,y}^{+}$ is dense in $F^{+}$. Let $t\in F^{+}$ and $\varepsilon>0$
be given. We will construct the necessary $x=0._{n}x_{1}x_{2}\ldots$.

Let $k\in\mathbb{N}$ such that $\varepsilon>\left(\frac{1}{n}\right)^{k-1}>0$.
Choose $x_{j}=t_{j}$ for all $1\le j\le k-1$ so that $\left|x-t\right|<\varepsilon$
regardless of any choice of remaining digits $x_{j}$ for $j\ge k$.
If $\sigma_{x}\left(k-1\right)=1$ then choose $x_{k}=0$ so that
$\sigma_{x}\left(k\right)=1$. Otherwise, if $\sigma_{x}\left(k-1\right)=-1$
then choose $x_{k}=n-d_{m}$ so that $\sigma_{x}\left(k\right)=1$.
Thus $\sigma_{x}\left(k\right)=1$ and we begin in the interval case.

Since $k$ is finite, then $0<m^{\nu_{t}\left(k\right)-k\beta}<\infty$.
If $x_{j}=0$ then $\mu_{x}\left(x_{j}\right)=m$ so that $\nu_{x}\left(j+1\right)=\nu_{x}\left(j\right)+1$
and $m^{\nu_{x}\left(j\right)-j\cdot\beta}<m^{\nu_{x}\left(j+1\right)-\left(j+1\right)\beta}$.
Similarly, if $x_{j}=d_{m}$ then $\mu\left(x_{j}\right)=1$ so that
$\nu_{x}\left(j+1\right)=\nu_{x}\left(j\right)$ and $m^{\nu_{x}\left(j\right)-j\cdot\beta}>m^{\nu_{x}\left(j+1\right)-\left(j+1\right)\beta}$.
For all $j\ge k$, choose the remaining digits of $x$ such that 
\[
x_{j+1}=\begin{cases}
0 & \text{ if }m^{\nu_{x}\left(j\right)-j\cdot\beta}\le y\\
d_{m} & \text{ if }m^{\nu_{x}\left(j\right)-j\cdot\beta}>y.
\end{cases}
\]

Thus, if $x_{j+1}=d_{m}$ then $m^{\nu_{x}\left(j+1\right)-\left(j+1\right)\cdot\beta}=m^{-\beta}m^{\nu_{x}\left(j\right)-j\cdot\beta}>y\cdot m^{-\beta}$
so that 
\[
y\cdot m^{-\beta}\le\liminf_{j\to\infty}\left\{ m^{\nu_{x}\left(j\right)-j\cdot\beta}\right\} \le y.
\]

Therefore, $y\cdot m^{-2\beta}\le\mathscr{H}^{\beta\log_{n}\left(m\right)}\left(C\cap\left(C+x\right)\right)\le y$
by Theorem \ref{thm:m^-bL<H<L}.
\end{proof}
It would be ideal to construct $x$ such that $L_{x}=y$ in the proof
of Corollary \ref{cor:F_b,A dense in F}, however this is not always
possible. Example \ref{ex:L_t is nowhere dense.} shows a class of
sparse Cantor sets $C_{n,D}$ such that $L_{t}$ is either infinite
or some element of a countable, nowhere dense subset of $\mathbb{R}$
for all $t\in F^{+}$.
\begin{example}
\label{ex:L_t is nowhere dense.}Let $n\ge3$ and $D=\left\{ 0,d\right\} $
be given for some $2\le d<n$ so that $C=C_{n,D}$ is sparse. Choose
$\beta=\frac{a}{b}$ for some integers $0\le a\le b$ and $b\neq0$.
Then $\frac{\mu_{t}\left(j\right)}{\mu_{t}\left(j-1\right)}=1,2$
for any $t\in F^{+}$ and $j\in\mathbb{N}_{0}$. Define $p_{k}:=\#\left\{ j\le k\mid\mu\left(j\right)=2\mu_{t}\left(j-1\right)\right\} $
and $q_{k}:=\#\left\{ j\le k\mid\mu_{t}\left(j\right)=\mu_{t}\left(j-1\right)\right\} $
for each $k$ so that $p_{k},q_{k}\in\mathbb{N}_{0}$ and $k=p_{k}+q_{k}$.
Thus, 
\begin{align*}
\nu_{t}\left(k\right)-k\beta=p_{k}-\left(p_{k}+q_{k}\right)\beta & =\frac{1}{b}\left(p_{k}b-a\left(p_{k}+q_{k}\right)\right)\in\frac{1}{b}\mathbb{Z}.
\end{align*}

If $\liminf_{k\to\infty}\left\{ \nu_{t}\left(k\right)-k\beta\right\} =-\infty$
then $L_{t}=0$ and if $\liminf_{k\to\infty}\left\{ \nu_{t}\left(k\right)-k\beta\right\} =\infty$
then $L_{t}=\infty$. Otherwise, any subsequence $\nu_{t}\left(k_{j}\right)-k_{j}\beta\to r$
is a bounded sequence of $\frac{1}{b}\mathbb{Z}$. Hence, if $L_{t}$
is finite then $L_{t}\in\left\{ 2^{r}\mid b\cdot r\in\mathbb{Z}\right\} $
and there is no real $x$ such that $0<L_{x}<\sqrt[b]{2}$ for this
choice of $C_{n,D}$.
\end{example}

\subsection{Finite $n$-ary representations.}

According to Theorem \ref{Sec-3-thm:Finite-Representations}, if $t\in F^{+}$
admits finite $n$-ary representation then $C\cap\left(C+t\right)$
is either finite, or a finite collection of sets $\frac{1}{n^{k}}\left(C+h_{j}\right)$.
Therefore, the Hausdorff $\log_{n}\left(m\right)$-dimensional measure
is either zero or can be expressed in terms of $\mathscr{H}^{s}\left(C\right)$
for $s:=\log_{n}\left(m\right)$.

The exact Hausdorff measure of many Cantor set in $\left[0,1\right]$
can be calculated by methods of \cite{AySt99,Ma86,Ma87}; this includes
deleted digits Cantor sets $C=C_{n,D}$. The proof of Theorem 8.6
in \cite{Fal85} estimates the Hausdorff measure of an arbitrary self-similar
set and gives the bounds $\frac{1}{3n}\le\mathscr{H}^{s}\left(C_{n,D}\right)\le1$.
The basic idea of the proof of Theorem \ref{thm:m^-bL<H<L} leads
to bounds on $\mathscr{H}^{s}\left(C_{n,D}\right),$ we include these
bounds for completeness. This is much simpler than the proof of Theorem
\ref{thm:m^-bL<H<L} since the needed versions of Lemma \ref{Sec-4-Lem:Interval_Count}
and Lemma \ref{Sec-4-lem:Nothing_Goes_Away_1} are trivial. 
\begin{thm}
\label{thm:Finite Measure}Let $C=C_{n,D}$ be given and $s:=\log_{n}\left(m\right)$.
Then $\frac{1}{m}\le\mathscr{H}^{s}\left(C\right)\le1$.\end{thm}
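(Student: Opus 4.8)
The plan is to mirror the proof of Theorem \ref{thm:m^-bL<H<L} in the degenerate case $t=0$, where the analogue of Lemma \ref{Sec-4-Lem:Interval_Count} is the trivial fact that $C_k$ is the disjoint union of $m^k$ $n$-ary intervals of length $n^{-k}$ covering $C$ (so the role of $\mu_t(k)$ is played by $m^k$, giving $\beta=1$ and $s=\log_n m$), and the analogue of Lemma \ref{Sec-4-lem:Nothing_Goes_Away_1} is the trivial fact that every such interval contains a point of $C$. For the upper bound I would simply use the cover of $C$ by the $m^k$ intervals of $C_k$: each has diameter $n^{-k}$ and $\bigl(n^{-k}\bigr)^{\log_n m}=m^{-k}$, so $\mathscr{H}^s_{n^{-N}}(C)\le m^k\cdot m^{-k}=1$ for every $k\ge N$, and letting $N\to\infty$ in \eqref{eq:Hausdorff measure} gives $\mathscr{H}^s(C)\le 1$.

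For the lower bound I would take an arbitrary closed $\varepsilon$-cover of $C$, pass to a finite subcover $\{U_i\}_{i=1}^r$ by compactness, and for each $i$ let $h_i$ be the integer with $n^{-(h_i+1)}\le|U_i|<n^{-h_i}$. Fix $k\ge\max_i(h_i+1)$ and let $\mathcal{U}_i$ be the collection of $n$-ary intervals of $C_k$ meeting $U_i$. Since every interval of $C_k$ contains a point of $C$, each is met by some $U_i$, so $m^k\le\sum_i\#\mathcal{U}_i$. Granting the bound $\#\mathcal{U}_i\le m^{k-h_i}$ (up to a bounded additive error independent of $k$), one divides by $m^k$ and lets $k\to\infty$ to obtain $\sum_i m^{-h_i}\ge 1$, whence
\[
\sum_i|U_i|^s\ge\sum_i\bigl(n^{-(h_i+1)}\bigr)^{s}=m^{-1}\sum_i m^{-h_i}\ge m^{-1},
\]
using $n^{-s}=m^{-1}$ and $n^{-h_i s}=m^{-h_i}$. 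As the cover is arbitrary, $\mathscr{H}^s(C)\ge m^{-1}$.

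The main obstacle is precisely the counting bound $\#\mathcal{U}_i\le m^{k-h_i}$. In Theorem \ref{thm:m^-bL<H<L} this came for free from $\sigma_t(h_i)=\pm1$, which forces the surviving intervals of $C_{h_i}\cap(C_{h_i}+\lfloor t\rfloor_{h_i})$ to be separated by at least $n^{-h_i}$, so a $U_i$ of diameter $<n^{-h_i}$ meets only one of them and hence at most its $m^{k-h_i}$ refinements. For the bare set $C$ this separation can fail: when $D$ is not sparse (for instance $1\in\Delta$) consecutive $n$-ary intervals of $C_{h_i}$ may be adjacent, and a $U_i$ straddling such a junction meets two level-$h_i$ intervals $I$ (on the left) and $I'$ (on the right). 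The point that rescues the count is that, since $|U_i|<n^{-h_i}=|I|=|I'|$, the part of $C$ reachable inside $I$ lies in the top of $I$ and the part inside $I'$ lies in the bottom of $I'$; after rescaling both by $n^{h_i}$ these become the sets $[1-a,1]$ and $[0,b]$ with $a+b<1$, which are disjoint subsets of $[0,1]$, so together they carry at most the $m^{k-h_i}$ level-$(k-h_i)$ subintervals of a single $C_{k-h_i}$. Hence $\#\mathcal{U}_i\le m^{k-h_i}$ (plus at most one boundary interval on each side, absorbed in the limit), and no factor of two is lost. Equivalently one may package this disjointness as the estimate $\mu(U)\le m\,|U|^{s}$ for the uniform self-similar measure $\mu$ on $C$, which by the mass distribution principle gives $\mathscr{H}^s(C)\ge\mu(C)/m=1/m$ directly.
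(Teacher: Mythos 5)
Your proof follows essentially the same route as the paper's: the identical $C_{k}$-cover for the upper bound, and for the lower bound the same finite-subcover and counting scheme, where your disjointness argument giving $\#\mathcal{U}_{i}\le m^{k-h_{i}}+O(1)$ when $U_{i}$ straddles two adjacent level-$h_{i}$ intervals is a rescaled version of the paper's observation that $U_{i}$ cannot meet both $K\left(p\right)$ and $K\left(p\right)-n^{-h_{i}}$ except for at most one boundary subinterval. Your closing remark packaging the estimate as $\mu\left(U\right)\le m\left|U\right|^{s}$ together with the mass distribution principle is a clean equivalent formulation rather than a genuinely different method.
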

\begin{proof}
Let $V_{i}$ denote the $i^{\text{th}}$ $n$-ary interval of $C_{k}$
so that $C_{k}=\bigcup_{i=1}^{m^{k}}V_{i}$ is a cover of $C$. Then
$\sum_{i=1}^{m^{k}}\left|V_{i}\right|^{s}=m^{k}\cdot\left(n^{-k}\right)^{\log_{n}\left(m\right)}=1$
for all $k\in\mathbb{N}_{0}$ so that $\mathscr{H}^{s}\left(C\right)\le\mathscr{H}_{n^{-k}}^{s}\left(C\right)\le1$.

The proof of the lower bound is similar to the proof of Theorem \ref{thm:m^-bL<H<L}
with minor variations. Let $\varepsilon>0$ be given and $\left\{ U_{i}\right\} _{i=1}^{r}$
be an arbitrary closed $\varepsilon$-cover of $C$ for some integer
$r$. For each $1\le i\le r$, let $h_{i}$ denote the integer satisfying
$n^{-h_{i}-1}\le\left|U_{i}\right|<n^{-h_{i}}$. 

Let $k\ge\max\left\{ h_{i}+1\mid1\le i\le r\right\} $ be arbitrary
and, for each $1\le i\le r$, define $\mathcal{U}_{i}$ to be the
collection of $n$-ary intervals $J$ selected from $C_{k}$ such
that $J\cap U_{i}\neq\varnothing$. Each $J\in\mathcal{U}_{i}$ contains
points of $C$ by the Nested Intervals Theorem so that $\bigcup_{i=1}^{r}\mathcal{U}_{i}=C_{k}$
and each interval $K\subset C_{h_{i}}$ contains $m^{k-h_{i}}$ $n$-ary
intervals of $C_{k}$.

Since $\frac{1}{n^{h_{i}}}>\left|U_{i}\right|$, then each $U_{i}$
intersects at most two intervals of $C_{h_{i}}$. Suppose $U_{i}$
intersects both $K$ and $K-\frac{1}{n^{h_{i}}}$ for some $n$-ary
interval $K\subset C_{h_{i}}$ and let $K\left(p\right)\subset C_{k}\cap K$
denote the $n$-ary subintervals of $K$ for $1\le p\le m^{k-h_{i}}$.
Note that if $U_{i}\cap K\left(p\right)\neq\varnothing$ for some
$p$ then $U_{i}\cap\left(K\left(p\right)-\frac{1}{n^{h_{i}}}\right)$
is empty unless $K\left(p\right)$ contains an endpoint of $U_{i}$
and $\left|U_{i}\right|>\frac{n-1}{n^{h_{i}+1}}$. Thus, $U_{i}$
intersects at most $m^{k-h_{i}}+1$ intervals of $C_{k}$ so that
$\#\mathcal{U}_{i}\le m^{k-h_{i}}+1$ for all $1\le i\le r$ and 
\[
m^{k}=\#\left(\bigcup_{i=1}^{r}\mathcal{U}_{i}\right)\le\sum_{i=1}^{r}\#\mathcal{U}_{i}\le\sum_{i=1}^{r}\left(m^{k-h_{i}}+1\right).
\]

Therefore, $1-r\cdot m^{-k}\le\sum_{i=1}^{r}m^{-h_{i}}$ so that
\begin{align}
\sum_{i=1}^{r}\left|U_{i}\right|^{s} & \ge\sum_{i=1}^{r}\left(\frac{1}{n}\right)^{\left(h_{i}+1\right)s}\ge\frac{1}{m}\cdot\sum_{i=1}^{r}m^{-h_{i}}\ge\frac{1}{m}\left(1-r\cdot m^{-k}\right).\label{eq:Lower Bound}
\end{align}

Since $\left\{ U_{i}\right\} _{i=1}^{r}$ is an arbitrary $\varepsilon$-cover
of $C$ and equation (\ref{eq:Lower Bound}) holds for any sufficiently
large $k$, then $\mathscr{H}_{\varepsilon}^{s}\left(C\right)\ge\lim_{k\to\infty}\left\{ \frac{1}{m}\left(1-rm^{-k}\right)\right\} =\frac{1}{m}$
for any $\varepsilon>0$. Hence, $\frac{1}{m}\le\mathscr{H}^{s}\left(C\right)$.
\end{proof}
Let $n=9$, $D=\left\{ 0,d,8\right\} $ for some integer $0<d<8$,
and $s:=\log_{9}\left(3\right)=\frac{1}{2}$. If $d=4$ then $D$
is uniform and $\mathscr{H}^{s}\left(C_{9,\left\{ 0,4,8\right\} }\right)=1$.
However, if $d=2$ then $D$ is regular and it is shown in example
\ref{ex:< M_t =00003D L_t} that $\mathscr{H}^{s}\left(C_{9,\left\{ 0,2,8\right\} }\right)<1$.
\begin{cor}
\label{Sec-5-cor:estimate-for-finite-t} Let $C=C_{n,D}$ be arbitrary,
$s:=\log_{n}\left(m\right)$, and $t\in F^{+}$ such that $t=0._{n}t_{1}t_{2}\cdots t_{k}$.
Then $C\cap\left(C+t\right)=A\cup B$ and the following hold:
\begin{enumerate}
\item If $A$ is nonempty, then $A=\bigcup_{j=1}^{a}\frac{1}{n^{k}}\left(C+h_{j}\right)$
for some integer $a$ and \textup{$\frac{a}{m^{k+1}}\le\mathscr{H}^{s}\left(C\cap C+t\right)\le\frac{a}{m^{k}}.$
In particular, if $D$ is sparse then $a=\mu_{t}\left(k\right)$.}
\item If $A$ is empty, then $\mathscr{H}^{0}\left(C\cap\left(C+t\right)\right)=\#B$.
If $D$ is sparse then $\#B=\mu_{t+n^{-k}}\left(k\right)+\mu_{t-n^{-k}}\left(k\right)$.
\end{enumerate}
\end{cor}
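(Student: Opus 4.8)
The plan is to take the decomposition $C\cap\left(C+t\right)=A\cup B$ directly from Theorem \ref{Sec-3-thm:Finite-Representations} and then to evaluate $\mathscr{H}^{s}$ on $A$ and $B$ separately, using that $A$ is a finite union of rescaled copies of $C$ while $B$ is finite. The computation rests on two standard properties of Hausdorff measure: translation invariance and the scaling law $\mathscr{H}^{s}\left(\lambda K\right)=\lambda^{s}\mathscr{H}^{s}\left(K\right)$ (see \cite{Fal85}). Taking $\lambda=n^{-k}$ and $s=\log_{n}\left(m\right)$ gives $\left(n^{-k}\right)^{s}=m^{-k}$, hence $\mathscr{H}^{s}\left(\frac{1}{n^{k}}\left(C+h\right)\right)=m^{-k}\mathscr{H}^{s}\left(C\right)$ for every integer $h$.

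For part (1), Theorem \ref{Sec-3-thm:Finite-Representations} writes $A=\bigcup_{j=1}^{a}\frac{1}{n^{k}}\left(C+h_{j}\right)$ with the integers $h_{j}$ distinct, one copy for each $n$-ary interval of $C_{k}$ in the interval case. Distinct $h_{j}$ place the copies inside the intervals $\frac{1}{n^{k}}\left[h_{j},h_{j}+1\right]$, whose interiors are disjoint, so two copies can meet in at most a common endpoint, an $\mathscr{H}^{s}$-null set because $s>0$. Countable additivity up to null sets then gives $\mathscr{H}^{s}\left(A\right)=a\,m^{-k}\mathscr{H}^{s}\left(C\right)$. Since $B$ is finite and $s>0$ we have $\mathscr{H}^{s}\left(B\right)=0$, so by monotonicity and subadditivity $\mathscr{H}^{s}\left(C\cap\left(C+t\right)\right)=\mathscr{H}^{s}\left(A\right)$; inserting $\frac{1}{m}\le\mathscr{H}^{s}\left(C\right)\le1$ from Theorem \ref{thm:Finite Measure} yields $\frac{a}{m^{k+1}}\le\mathscr{H}^{s}\left(C\cap\left(C+t\right)\right)\le\frac{a}{m^{k}}$. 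If moreover $D$ is sparse and $A\neq\varnothing$, then some interval is in the interval case, so $\sigma_{t}\left(k\right)=1$ and Proposition \ref{Sec-1-prop:Either-or} forbids potential-interval-case intervals; the recursive count underlying Lemma \ref{Sec-4-Lem:Interval_Count}, which is valid at the fixed level $k$ through the transition Lemma \ref{Sec-4-Lemma:Transition}, then identifies $a=\mu_{t}\left(k\right)$.

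For part (2), if $A=\varnothing$ then $C\cap\left(C+t\right)=B$ is finite, and since $\mathscr{H}^{0}$ is the counting measure we obtain $\mathscr{H}^{0}\left(C\cap\left(C+t\right)\right)=\#B$ at once. For the sparse count I would first observe that $A=\varnothing$ together with $t\in F^{+}$ forces $d_{m}=n-1$ via Theorem \ref{Sec-3-thm:Finite-Representations}, so that $B$ is built from the potential interval and potentially empty intervals; counting the intervals lying in both cases once on each side, that theorem gives $\#B=\#P+\#E$, where $P$ and $E$ are the sets of $n$-ary intervals of $C_{k}$ in the potential interval and potentially empty cases. The geometric core is that $J$ lies in the potential interval case for $t$ exactly when $J\subset C_{k}+\left(t+n^{-k}\right)$, equivalently when $J$ is in the interval case for $t+n^{-k}$, and symmetrically $J$ lies in the potentially empty case for $t$ exactly when $J$ is in the interval case for $t-n^{-k}$. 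Sparseness converts these interval-case counts into $\mu$-values: the presence of interval-case intervals for $t\pm n^{-k}$ excludes potential-interval-case intervals for the same translate by Proposition \ref{Sec-1-prop:Either-or}, whence $\sigma_{t\pm n^{-k}}\left(k\right)=1$ and $\#P=\mu_{t+n^{-k}}\left(k\right)$, $\#E=\mu_{t-n^{-k}}\left(k\right)$, so $\#B=\mu_{t+n^{-k}}\left(k\right)+\mu_{t-n^{-k}}\left(k\right)$.

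The scaling computation and the vanishing of $\mathscr{H}^{s}$ on the finite set $B$ are routine; the delicate part is the sparse count in part (2). The main obstacle will be justifying the passage from the families $P$ and $E$ attached to $t$ to honest interval-case families for the shifted translates $t\pm n^{-k}$: I must verify that each shift again lies in $F^{+}$ and satisfies $\sigma\left(j\right)=\pm1$ for all $j\le k$, so that $\mu_{t\pm n^{-k}}\left(k\right)$ really counts the intervals in question, and I must interpret $\mu$ as $0$ in the degenerate cases $\#P=0$ or $\#E=0$, where the shifted translate either leaves $F^{+}$ or enters the irrecoverable case before level $k$.
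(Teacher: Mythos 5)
Your proposal is correct and follows essentially the same route as the paper: the decomposition and the measure bounds come from Theorem \ref{Sec-3-thm:Finite-Representations} and Theorem \ref{thm:Finite Measure} together with the scaling law, and the sparse counts are obtained exactly as in the paper by identifying the potential-interval and potentially-empty intervals of $C_{k}$ with interval-case intervals for the shifted translates $t\pm n^{-k}$. The degenerate-case caveat you flag at the end is a genuine point of care, but the paper's own proof glosses over it in the same way.
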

\begin{proof}
The general statements follow immediately from Theorem \ref{thm:Finite Measure}
and Theorem \ref{Sec-3-thm:Finite-Representations}. We only need
show the result when $D$ is sparse. Without loss of generality, assume
that $k$ is the minimal element of $\left\{ j\mid t=0._{n}t_{1}\cdots t_{j}\right\} $.

Suppose $A$ is nonempty and $s=\log_{n}\left(m\right)$. Since $\frac{1}{n^{j}}>t-\left\lfloor t\right\rfloor _{j}>0$
for any $1\le j<k$, we can apply Lemma \ref{Sec-3-Lem:empty-can-be-ignored}
so that $C_{k}\cap\left(C_{k}+\left\lfloor t\right\rfloor _{k}\right)=C_{k}\cap\left(C_{k}+t\right)$
consists of $\mu_{t}\left(k\right)$ disjoint intervals. Since each
such interval refines to $\frac{1}{n^{k}}\left(C+h_{j}\right)$ and
$\mathscr{H}^{s}\left(B\setminus A\right)=0$, it follows that $a=\mu_{t}\left(k\right)$.

Suppose $A$ is empty so that $B$ contains a finite number of isolated
points by definition of $F$. Any $n$-ary interval $J\subset C_{k}$
in the potential interval case is also an $n$-ary interval of $C_{k}+\left\lfloor t\right\rfloor _{k}+\frac{1}{n^{k}}$.
Thus, $J$ is in the interval case of $C_{k}\cap\left(C_{k}+\left\lfloor t\right\rfloor _{k}+\frac{1}{n^{k}}\right)$
and $B$ contains $\mu_{t+n^{-k}}\left(k\right)$ points corresponding
to potential intervals. Similarly, if $J\subset C_{k}\cap\left(C_{k}+\left\lfloor t\right\rfloor _{k}\right)$
is in the potentially empty case then $J$ is an interval case of
$C_{k}\cap\left(C_{k}+\left\lfloor t\right\rfloor _{k}-\frac{1}{n^{k}}\right)$
and $B$ contains $\mu_{t-n^{-k}}\left(k\right)$ points corresponding
to potentially empty cases. 

Since $d-d'\ge2$ for all $d,d'\in D\subset\Delta$, then no point
of $B$ can be in both the potential interval and potentially empty
cases. Hence, $\#B=\mu_{t+n^{-k}}\left(k\right)+\mu_{t-n^{-k}}\left(k\right)$.
\end{proof}

\section{Examples\label{sec:Examples}}

We use the results of the previous sections to estimate the Hausdorff
measure of $C\cap\left(C+t\right)$. The following examples demonstrate
when the Hausdorff measure is equal to both $\widetilde{L}_{t}$ and
$L_{t}$ (Example \ref{ex: =00003D M_t =00003D L_t}), equal to $\widetilde{L}_{t}$
but less than $L_{t}$ (Example \ref{ex: =00003D M_t < L_t}), or
less than both $\widetilde{L}_{t}$ and $L_{t}$ (Example \ref{ex:< M_t =00003D L_t}).
\begin{example}
\label{ex: =00003D M_t =00003D L_t}Let $C=C_{n,D}$ be sparse such
that $\mathscr{H}^{s}\left(C\right)=1$ for $s=\log_{n}\left(m\right)$.
This is true for the class of uniform sets such that $d_{m}=n-1$
by \cite{Fal85}. Choose $t=0._{n}t_{1}t_{2}\cdots t_{k}$ for some
$k$ such that $\sigma_{k}\left(t\right)=1$. Then $\nu_{t}\left(k+j\right)=\nu_{t}\left(k\right)+j$
for all $j\ge0$ and $\beta_{t}=1$ so that 
\[
L_{t}=\liminf_{j\to\infty}\left\{ m^{\nu_{t}\left(k+j\right)-\left(k+j\right)\beta_{t}}\right\} =m^{\nu_{t}\left(k\right)-k}.
\]
Since $C\cap\left(C+t\right)=\bigcup_{j}\frac{1}{n^{k}}\left(C+h_{j}\right)$
consists of $m^{\nu_{t}\left(k\right)}$ disjoint copies of $\frac{1}{n^{k}}C$,
then 
\[
\mathscr{H}^{s}\left(C\cap\left(C+t\right)\right)=m^{\nu_{t}\left(k\right)-k}\cdot\mathscr{H}^{s}\left(C\right)=L_{t}.
\]

\begin{example}
\label{ex: =00003D M_t < L_t}Let $C=C_{3,\left\{ 0,2\right\} }$
denote the Middle Thirds Cantor set and let $t:=0._{3}\overline{20}=\frac{3}{4}$.
Then $\nu_{t}\left(k\right)=\left\lfloor \frac{k+1}{2}\right\rfloor $
for all $k$ so that $\nu_{t}\left(2k\right)=k$ and $\nu_{t}\left(2k+1\right)=k+1$.
Thus, $\beta_{t}=\frac{1}{2}$ so that $\nu_{t}\left(2k\right)-2k\beta_{t}=0$
and $\nu_{t}\left(2k+1\right)-\left(2k+1\right)\beta_{t}=\frac{1}{2}$.
Hence, $L_{t}=\liminf_{k\to\infty}\left\{ 1,\sqrt{2},1,\ldots\right\} =1$.
\end{example}
Since $\ell_{2k}=\frac{1}{9^{k}}-\frac{1}{9^{k}}\left(\frac{3}{4}\right)=\frac{1}{4\cdot9^{k}}$
and $\ell_{2k+1}=\frac{1}{3\cdot9^{k}}-\frac{1}{3\cdot9^{k}}\left(\frac{1}{4}\right)=\frac{1}{4\cdot9^{k}}$,
then for $s:=\log_{9}\left(2\right)$,
\[
\widetilde{L}_{t}=\liminf_{k\to\infty}\left\{ 2^{\nu_{t}\left(k\right)-k}\left(\frac{1}{4}\right)^{s}\right\} =\liminf_{k\to\infty}\left\{ \left(\frac{9}{4}\right)^{s},\left(\frac{1}{4}\right)^{s},\left(\frac{9}{4}\right)^{s},\ldots\right\} =\left(\frac{1}{4}\right)^{s}.
\]

Therefore, $\mathscr{H}^{s}\left(C\cap\left(C+t\right)\right)\le\widetilde{L}_{t}<L_{t}$.
An upcoming paper, by the co-authors, shows that the Hausdorff measure
is exactly $4^{-s}$ for this choice of $C_{n,D}$ and $t$.
\begin{example}
\label{ex:< M_t =00003D L_t}Let $n=9$ and $D=\left\{ 0,2,8\right\} $
so that $C=C_{n,D}$ is regular. Choose $t:=0$ so that for all $k$,
$\nu_{t}\left(k\right)=k$, $\ell_{k}=\frac{1}{n^{k}}$, $\beta_{t}=1$,
and $\widetilde{L}_{t}=L_{t}=\liminf\left\{ m^{\nu_{t}\left(k\right)-k\beta_{t}}\right\} =1$.
Since $C\cap\left(C+t\right)=C$, we will show that $\mathscr{H}^{s}\left(C\right)<1$
for $s:=\log_{9}\left(3\right)=\frac{1}{2}$.
\end{example}
Let $\varepsilon>0$ be given and choose $k$ such that $\varepsilon>\frac{1}{n^{k-1}}$.
Let $J=\frac{1}{n^{k-1}}\left(C_{0}+h_{j}\right)$ be an arbitrary
$n$-ary interval of $C_{k-1}$. Then the refinement of $J$ consists
of three subintervals $J\left(1\right)=\frac{1}{n^{k}}\left(C_{0}+h_{j}n\right)$,
$J\left(2\right)=\frac{1}{n^{k}}\left(C_{0}+h_{j}n+2\right)$, and
$J\left(3\right)=\frac{1}{n^{k}}\left(C_{0}+h_{j}n+8\right)$. Choose
$U_{2j-1}=\frac{1}{n^{k}}\left(3C_{0}+h_{j}n\right)$ so that $J\left(1\right)\cup J\left(2\right)\subset U_{2j-1}$
and choose $U_{2j}=J\left(3\right)$. Since there are $3^{k-1}$ such
intervals $J$ and $\varepsilon>\left|J\right|>\left|U_{2j-1}\right|>\left|U_{2j}\right|$,
then the collection $\left\{ U_{j}\right\} _{j=1}^{2\cdot3^{k-1}}$
is an $\varepsilon$-cover of $C$. Therefore,
\begin{align*}
\mathscr{H}_{\varepsilon}^{s}\left(C\right) & \le\sum_{j=1}^{2\cdot3^{k-1}}\left|U_{j}\right|^{s}=\sum_{j=1}^{3^{k-1}}\left|U_{2j-1}\right|^{s}+\sum_{j=1}^{3^{k-1}}\left|U_{2j}\right|^{s}\\
 & =3^{k-1}\cdot\left(\frac{3}{9^{k}}\right)^{s}+3^{k-1}\cdot\left(\frac{1}{9^{k}}\right)^{s}=\frac{\sqrt{3}+1}{3}<\widetilde{L}_{t}.
\end{align*}

Since $\varepsilon>0$ is arbitrary, then $\frac{1}{3}\le\mathscr{H}^{s}\left(C\right)\le\frac{\sqrt{3}+1}{3}$
according to Theorem \ref{thm:Finite Measure}.
\end{example}
Theorem \ref{thm:m^-bL<H<L} shows that the Hausdorff measure of $C\cap\left(C+t\right)$
is equal to $L_{t}$ whenever $L_{t}$ is zero or infinite. In the
following example we construct $x,y\in F$ such that $L_{x}=\infty$
and $L_{y}=0$ so that the sets $C\cap\left(C+x\right)$ and $C\cap\left(C+y\right)$
are not self-similar.
\begin{example}
\label{ex:L_s=00003D0 and L_r=00003Dinfty}Let $n=11$, $D=\left\{ 0,7,10\right\} $,
and $t:=0._{11}\overline{70}$ so that $\nu_{t}\left(k\right)=\left\lfloor \frac{k+1}{2}\right\rfloor $
for all $k$. Thus, $\nu_{t}\left(2k\right)=k$ and $\nu_{t}\left(2k+1\right)=k+1$
so that $\beta_{t}=\frac{1}{2}$ and $s:=\frac{1}{2}\log_{11}\left(3\right)$.
Define $x:=0._{11}x_{1}x_{2}\ldots$ and $y:=0._{11}y_{1}y_{2}\ldots$
such that 
\begin{align*}
x_{k} & =\begin{cases}
0 & \text{ if }k=1+2j^{2}\text{ for some integer \ensuremath{j}}\\
t_{k} & otherwise
\end{cases}\\
y_{k} & =\begin{cases}
7 & \text{ if }k=2j^{2}\text{ for some integer \ensuremath{j}}\\
t_{k} & otherwise.
\end{cases}
\end{align*}

Since $\mu\left(t_{2j^{2}}\right)=1$ and $\mu\left(y_{2j^{2}}\right)=0$
for each integer $j$, and $\mu\left(t_{k}\right)=\mu\left(y_{k}\right)$
otherwise, then $\nu_{y}\left(2j^{2}\right)=\nu_{t}\left(2j^{2}\right)-j$
for each $j>0$. Thus, if $2j^{2}\le k<2\left(j+1\right)^{2}$ for
some $j$ then $\nu_{y}\left(k\right)=\nu_{t}\left(k\right)-j$ so
that $\beta_{y}=\beta_{t}=\frac{1}{2}$. Furthermore, 
\begin{align*}
L_{y} & \le\liminf_{j\to\infty}\left\{ 3^{\nu_{y}\left(2j^{2}\right)-\beta_{y}2j^{2}}\right\} =\liminf_{j\to\infty}\left\{ 3^{\nu_{t}\left(2j^{2}\right)-j-j^{2}}\right\} \\
 & =\liminf_{j\to\infty}\left\{ 3^{-j}\right\} =0.
\end{align*}

Therefore, $\mathscr{H}^{s}\left(C\cap\left(C+y\right)\right)=L_{y}=0$
by Theorem \ref{thm:m^-bL<H<L}.

Similarly, $\mu\left(t_{1+2j^{2}}\right)=0$ and $\mu\left(x_{1+2j^{2}}\right)=1$
for each integer $j$, and $\mu\left(t_{k}\right)=\mu\left(x_{k}\right)$
otherwise. Thus, $\nu_{x}\left(1+2j^{2}\right)=\nu_{t}\left(1+2j^{2}\right)+j$
for each $j>0$ and $\nu_{x}\left(k\right)=\nu_{t}\left(k\right)+j$
whenever $2j^{2}\le k<2\left(j+1\right)^{2}$. Therefore, $\beta_{x}=\beta_{t}=\frac{1}{2}$
and for each $k$, 
\begin{align*}
3^{\nu_{x}\left(k\right)-\beta_{x}\left(k\right)} & =3^{\nu_{t}\left(k\right)+j-\beta_{x}\left(k\right)}\ge3^{\frac{1}{2}k+j-\frac{1}{2}k}=3^{j}.
\end{align*}

Hence, $\mathscr{H}^{s}\left(C\cap\left(C+x\right)\right)=L_{x}\ge\liminf_{j\to\infty}\left\{ 3^{j}\right\} =\infty$.
\end{example}
Theorem \ref{thm:m^-bL<H<L} requires that $t$ does not admit finite
$n$-ary representation and that $\sigma_{t}\left(k\right)=\pm1$
for all $k$. The infinite representation requirement allows us to
ignore the potentially empty and empty cases by Lemma \ref{Sec-3-Lem:empty-can-be-ignored}.
The requirement that $\sigma_{t}\left(k\right)=\pm1$ for all $k$
allows us to not only count the total number of intervals and potential
intervals of $C_{k}$ using the function $\mu_{t}\left(k\right)$,
but also guarantees that all intervals and potential intervals contain
points in $C\cap\left(C+t\right)$. 

Note that $L_{t}$ is calculated by counting all interval and potential
interval cases at each step $k$. The following example demonstrates
when potential interval cases do not lead to points in $C\cap\left(C+t\right)$,
thus showing the necessity of Lemma \ref{Sec-4-Lem:Interval_Count}
and Lemma \ref{Sec-4-lem:Nothing_Goes_Away_1} to the calculations
in Theorem \ref{thm:m^-bL<H<L}:
\begin{example}
\label{ex: NGA-Required} Let $D=\left\{ 0,2,4,7,10,\cdots,4+3r\right\} $
for some integer $r>2$ and $n>4+3\left(r+1\right)$ so that $C=C_{n,D}$
is not sparse. Let $t:=0._{n}\overline{2}$ so that $\sigma_{t}\left(k\right)=i$
for all $k$. For each $k$, $C_{k}\cap\left(C_{k}+\left\lfloor t\right\rfloor _{k}\right)$
contains $2^{k}$ interval cases and $r\cdot2^{k-1}$ potential interval
cases, however the potential interval cases never contain points in
$C\cap\left(C+t\right)$ since $2$ is neither in $n-\Delta$ nor
$n-\Delta-1$. By calculation,
\begin{align*}
\beta_{t} & =\liminf_{k\to\infty}\left\{ \frac{\log_{m}\left(2+r\right)+\log_{m}\left(2^{k-1}\right)}{k}\right\} =\log_{m}\left(2\right)\\
L_{t} & =\liminf_{k\to\infty}\left\{ \left(2+r\right)\cdot2^{k-1}\cdot m^{-\beta_{t}k}\right\} =\frac{2+r}{2}.
\end{align*}
Thus, $m^{-\beta_{t}}=\frac{1}{2}$ and $\left[m^{-\beta_{t}}L_{t},L_{t}\right]=\left[\frac{2+r}{4},\frac{2+r}{2}\right]$
by the same method as Theorem \ref{thm:m^-bL<H<L}. We will show that
the Hausdorff measure at most $1<\frac{2+r}{4}$:

Since potential interval cases never contain points in $C\cap\left(C+t\right)$,
we can instead perform the same calculations using only the interval
cases as a cover of $C\cap\left(C+t\right)$. Thus, $\beta_{t}=\log_{m}\left(2\right)$
and $s:=\log_{n}\left(2\right)$ so that $\mathscr{H}^{s}\left(C\cap\left(C+t\right)\right)\le\liminf_{k\to\infty}\left\{ 2^{k}\cdot m^{-\beta_{t}k}\right\} =1$.
Thus, the calculation of $L_{t}$ gives an incorrect result even though
$\beta_{t}$ is calculated properly. 
\end{example}

\section{Open Questions}

It is known that integral self-affine sets must have rational Lebesgue
measure \cite{BoKr11} so, perhaps, the range of $t\mapsto\mathscr{H}^{s}\left(C\cap\left(C+t\right)\right)$
is not all of the interval $[0,\infty).$ See also Example \ref{ex:L_t is nowhere dense.}. 

It is likely that our methods provided an estimate of the Hausdorff
measure of $C_{n,D_{1}}\cap\left(C_{n,D_{2}}+t\right),$ simply by
replacing the sparcity condition by the assumption that $\left|\delta-\delta'\right|\geq2$
for all $\delta\neq\delta'$ in $D_{1}-D_{2}.$

\end{document}